\documentclass[dvips,aos,preprint]{imsart}

\RequirePackage[OT1]{fontenc} \RequirePackage{amsthm,amsmath}
\RequirePackage[numbers]{natbib}
\RequirePackage[colorlinks,citecolor=blue,urlcolor=blue]{hyperref}
\RequirePackage{hypernat}
\usepackage{amsmath}
\usepackage{amssymb}
\usepackage{amsmath}
\usepackage{amsfonts}
\usepackage[american]{babel}
\usepackage{graphicx}
\usepackage{flafter}
\usepackage[section]{placeins}
\arxiv{math.PR/0000000}

\startlocaldefs

\def\P{{\mathbb P}}
\def\E{{\mathbb E }}

\def\Bbb E{\mathbb{E}}
\def\Bbb R{\mathbb{R}}
\parskip=0.03truein
\hfuzz=0.3truein

\newtheorem{proposition}{Proposition}
\newtheorem{lemma}{Lemma}
\newtheorem{theorem}{Theorem}
\newtheorem{corollary}{Corollary}

\makeatletter \@addtoreset{equation}{section}

\makeatother

\font\tencmmib=cmmib10 \skewchar\tencmmib '60
\newfam\cmmibfam
\textfont\cmmibfam=\tencmmib

\font\tenmsb=msbm10 
\def\Bbb#1{\hbox{\tenmsb#1}}


\def\lessim{\ \lower4pt\hbox{$
\buildrel{\displaystyle <}\over\sim$}\ }
\def\gessim{\ \lower4pt\hbox{$\buildrel{\displaystyle >}
\over\sim$}\ }


%
\def\AA{{\cal A} }

\def\go0{\to 0}

\def\leftitem#1{\item{\hbox to\parindent{\enspace#1\hfill}}}

\def\sg{\sigma}

\def\sg2{\sigma^2}

\def\__{_{\infty}}

\def\E{\mathbb E}

\def\R{\mathbb R}
\newtheorem{assumption}{Assumption}

\numberwithin{equation}{section} \theoremstyle{plain}

\endlocaldefs

\begin{document}

\begin{frontmatter}
\title{Nuclear-norm penalization and optimal
rates for noisy low-rank matrix completion} \runtitle{Nuclear-norm
penalization and optimal rates}

\begin{aug}
\author{\fnms{Vladimir} \snm{Koltchinskii}\thanksref{t1,m1}\ead[label=e1]{vlad@math.gatech.edu}},
\author{\fnms{Karim} \snm{Lounici}\thanksref{m1}\ead[label=e2]{klounici@math.gatech.edu}}
\and
\author{\fnms{Alexandre B.} \snm{Tsybakov}\thanksref{t2,m2}
\ead[label=e3]{alexandre.tsybakov@ensae.fr}
\ead[label=u1,url]{http://www.foo.com}}

\thankstext{t1}{Supported in part by NSF grants
DMS-09-06880 and CCF-0808863}
\thankstext{t2}{Supported in part by ANR ``Parcimonie" and by PASCAL-2 Network of Excellence}
\runauthor{V. Koltchinskii, K. Lounici and A.B. Tsybakov}

\affiliation{Georgia Institute of Technology\thanksmark{m1} and
CREST\thanksmark{m2}}

\address{School of Mathematics\\
Georgia Institute of Technology\\
Atlanta, GA 30332-0160\\
\printead{e1}\\
\phantom{E-mail:\ }\printead*{e2}}

\address{CREST \\
3, Av. Pierre Larousse\\
92240 Malakoff, France \\
\printead{e3}\\}
\end{aug}

\begin{abstract} \
This paper deals with the trace regression model where $n$ entries
or linear combinations of entries of an unknown $m_1\times m_2$
matrix $A_0$ corrupted by noise are observed. We propose a new
nuclear-norm penalized estimator of $A_0$ and establish a general
sharp oracle inequality for this estimator for arbitrary values of
$n,m_1,m_2$ under the condition of isometry in expectation. Then
this method is applied to the matrix completion problem. In this
case, the estimator admits a simple explicit form and we prove that
it satisfies oracle inequalities with faster rates of convergence
than in the previous works. They are valid, in particular, in the
high-dimensional setting $m_1m_2\gg n$. We show that the obtained
rates are optimal up to logarithmic factors in a minimax sense and
also derive, for any fixed matrix $A_0$, a non-minimax lower bound
on the rate of convergence  of our estimator, which coincides with
the upper bound up to a constant factor. Finally, we show that our
procedure provides an exact recovery of the rank of $A_0$ with
probability close to 1. We also discuss the statistical learning
setting where there is no underlying model determined by $A_0$ and
the aim is to find the best trace regression model approximating the
data. As a by-product, we show that, under the Restricted Eigenvalue
condition, the usual vector Lasso estimator satisfies a sharp oracle
inequality (i.e., an oracle inequality with leading constant 1).
\end{abstract}

\begin{keyword}[class=AMS]
\kwd[Primary ]{62J99,62H12} \kwd[; secondary ]{60B20, 60G15}
\end{keyword}

\begin{keyword}
\kwd{matrix completion} \kwd{low-rank matrix estimation}
\kwd{recovery of the rank} \kwd{statistical learning}\kwd{optimal
rate of convergence}\kwd{noncommutative Bernstein inequality}
\kwd{Lasso}
\end{keyword}

\end{frontmatter}

\section{Introduction}
Assume that we observe $n$ independent random pairs $(X_i,Y_i),
i=1,\dots,n$, where $X_i$ are random matrices with dimensions
$m_1\times m_2$ and $Y_i$ are random variables in $\R$, satisfying
the trace regression model:
\begin{equation}\label{tracereg}
\E(Y_i|X_i) = \mathrm{tr}(X_i^\top A_0), \quad i=1,\dots,n,
\end{equation}
where $A_0 \in \mathbb R^{m_1\times m_2}$ is an unknown matrix,
$\E(Y_i|X_i)$ is the conditional expectation of $Y_i$ given $X_i$,
and $\mathrm{tr}(B)$ denotes the trace of matrix $B$. We consider
the problem of estimation of $A_0$ based on the observations
$(X_i,Y_i), i=1,\dots,n$. Though the results of this paper are
obtained for general $n,m_1,m_2$, the main motivation is in the
high-dimensional setting, which corresponds to $m_1m_2\gg n$, with
low-rank matrices $A_0$.

It will be convenient to write the model (\ref{tracereg}) in the
form
\begin{equation}\label{tracereg1}
Y_i = \mathrm{tr}(X_i^\top A_0) + \xi_i, \quad i=1,\dots,n,
\end{equation}
where the noise variables $\xi_i=Y_i-\E(Y_i|X_i)$ are independent
and have zero means.

For any matrices $A,B\in \R^{m_1\times m_2}$, we define the scalar
product
$$\langle A,B \rangle = \mathrm{tr}(A^\top B)
$$
and the bilinear form
$$\langle A,B \rangle_{L_2(\Pi)} = \frac{1}{n}\sum_{i=1}^n
{\mathbb E}\big(\langle A,X_i\rangle \langle B,X_i\rangle\big)\,.
$$
Here $\Pi= \frac{1}{n}\sum_{i=1}^n \Pi_i$, where $\Pi_i$ denotes the
distribution of $X_i$. The corresponding semi-norm $\|A
\|_{L_2(\Pi)}$ is given by $$\|A \|_{L_2(\Pi)}^2 =
\frac{1}{n}\sum_{i=1}^n {\mathbb E}\big(\langle
A,X_i\rangle^2\big)\,.
$$

\textbf{Example 1. Matrix Completion.} Assume that the design
matrices $X_i$ are i.i.d. uniformly distributed on the set
\begin{equation}
\mathcal{X} = \left\{ e_j(m_1) e_k^\top (m_2), 1\leq j \leq m_1,\,
1\leq k \leq m_2 \right\},
\end{equation}
where $e_k(m)$ are the canonical basis vectors in $\R^{m}$. The set
$\mathcal{X}$ forms an orthonormal basis in the space of $m_1\times
m_2$ matrices that will be called the matrix completion basis. Let
also $n< m_1m_2$. Then the problem of estimation of $A_0$ coincides
with the problem of matrix completion under uniform sampling at
random (USR) as studied in the non-noisy case ($\xi_i=0$) in
\cite{Gross-2,Recht}, and in the noisy case in
\cite{Rohde,Gaiffas_Lecue}. Considering low-rank matrices $A_0$ is
of a particular interest. Clearly, for such $X_i$ we have the
isometry
\begin{equation}\label{isom1}
\|A\|_{L_2(\Pi)}^2= {\mu}^{-2}\|A\|_2^2,
\end{equation}
 for all matrices $A\in
\R^{m_1\times m_2}$, where $\mu = \sqrt{m_1 m_2}$, and $\|A\|_{2}$
is the Frobenius norm of $A$. However, the restricted isometry
property in the usual sense, i.e., "in probability", cf., e.g.,
\cite{rfp07}, does not hold for matrix completion, since for $n<
m_1m_2$ there trivially exists a matrix of rank 1 in the null space
of the sampling operator.

One can also consider more general matrix measurement models in
which, for a given orthonormal basis in the space of matrices, a
random sample of Fourier coefficients of the target matrix $A_0$ is
observed subject to a random noise. For more discussion on matrix
completion with other types of sampling, see
\cite{cp09,Candes_Recht,Candes_Tao,Keshavan,Kol10} and references
therein.

\medskip

\textbf{Example 2. Column masks.} Assume that the design matrices
$X_i$ are i.i.d. replications of a random matrix $X$, which has only
one nonzero column. For instance, let the distribution of $X$ be
such that all the columns have equal probability to be non-zero, and
the random entries of non-zero column $x_{(j)}$ are such that
$\E(x_{(j)}x_{(j)}^\top)$ is the identity matrix. Then
$\|A\|_{L_2(\Pi)}^2= \|A\|_2^2/m_2,\, \ \forall A\in \R^{m_1\times
m_2}$, so that condition (\ref{isom1}) is satisfied with $\mu =
\sqrt{m_2}$. More generally, in view of application to multi-task
learning, cf. \cite{Rohde}, one can be interested in considering
non-identically distributed $X_i$. The model can be then
reformulated as a longitudinal regression model, with different
distributions of $X_i$ corresponding to different tasks.

\medskip

\textbf{Example 3. "Complete" subgaussian design.} Assume that the
design matrices $X_i$ are i.i.d. replications of a random matrix $X$
such that $\langle A,X \rangle$ is a subgaussian random variable for
any $A\in \R^{m_1\times m_2}$. This approach has its roots in
compressed sensing. The two major examples are given by the matrices
$X$ whose entries are either i.i.d. standard Gaussian or Rademacher
random variables. In both cases, we have $\|A\|_{L_2(\Pi)}^2=
\|A\|_2^2,\, \ \forall A\in \R^{m_1\times m_2}$, so that condition
(\ref{isom1}) is satisfied with $\mu = 1$. The problem of exact
reconstruction of $A_0$ under such a design in the non-noisy setting
was studied in \cite{rfp07,Candes_Plan,nw09}, whereas estimation of
$A_0$ in the presence of noise is analyzed in
\cite{nw09,Rohde,Candes_Plan}, among which \cite{Rohde,Candes_Plan}
treat the high-dimensional case $m_1m_2>n$.

\medskip

\textbf{Example 4. Fixed design.} Assume that all the $\Pi_i$ are
Dirac measures, so that the design matrices $X_i$ are non-random.
Then $\|A\|_{L_2(\Pi)}^2= \frac{1}{n}\sum_{i=1}^n \langle
A,X_i\rangle^2,$ and we get the problem of trace regression with
fixed design, cf. \cite{Rohde}. In particular, if $m_1=m_2$, and $A$
and $X_i$ are diagonal matrices the trace regression model
(\ref{tracereg1}) becomes the usual linear regression model.
Accordingly, the rank of $A$ becomes the number of its non-zero
diagonal elements. This observation will allow us to deduce, as a
consequence of our general argument, an oracle inequality for the
usual Lasso in sparse linear regression with fixed design improving
\cite{BRT09} in the sense that the inequality is sharp (cf. Theorem
\ref{th:main_RE} and Section \ref{subsec:lasso}).

The general oracle inequalities that we will prove in Section
\ref{oracle} can be successfully applied to the above examples. The
emphasis in this paper will be on the matrix completion problem
(Example 1), for which the previously obtained results were
suboptimal.

Statistical estimation of low-rank matrices has recently become a
very active field with a rapidly growing literature. The most
popular methods are based on penalized empirical risk minimization
with nuclear-norm penalty
\cite{argyr07,argyr08,argyr09,bach08,bsw10,cp09,Candes_Plan,Gaiffas_Lecue,nw09,nw10,Rohde}.
Estimators with other types of penalization, such as the
Schatten-$p$ norm \cite{Rohde}, the von Neumann entropy
\cite{Kol10}, penalization by the rank \cite{bsw10,gir10} or some
combined penalties \cite{Gaiffas_Lecue} are also discussed.

It is worth pointing out that in many applications, such as in
matrix completion, the distribution $\Pi$ is known, and yet this
information has not been exploited since the penalized estimation
procedures considered in the literature involve the empirical risk
$\frac{1}{n}\sum_{i=1}^n (Y_i - \mathrm{tr}(X_i^\top A))^2$. 
In this paper we incorporate the
knowledge of $\Pi$ in the construction and we study the following
estimator of $A_0$:
\begin{equation}
\label{ERM} {\hat A}^\lambda = \displaystyle{{\rm argmin}_{A\in
\mathbb A }} L_n(A),
\end{equation}
where $\mathbb A\subseteq {\mathbb R}^{m_1\times m_2}$ is a set of
matrices,
\begin{equation}
\label{ER} L_n(A)=\|A\|_{L_2(\Pi)}^2 -\biggl\langle
\frac{2}{n}\sum_{i=1}^n Y_i X_i, A \biggr\rangle + \lambda \|A\|_1,
\end{equation}
$\lambda>0$ is a regularization parameter, and $\|A\|_1$ is the
nuclear norm of $A$. We will mainly consider convex sets $\mathbb
A$. Note that if all $X_i$ are non-random, ${\hat A}^\lambda$
coincides with the usual matrix Lasso estimator:
\begin{equation}
\label{ER_fixed} \hat A^{\lambda}={\rm argmin}_{A\in {\mathbb
A}}\biggl[ n^{-1}\sum_{j=1}^n (Y_j-\langle A,X_j\rangle)^2
 + \lambda \|A\|_1\biggr].
\end{equation}

The emphasis in this paper is on the noisy matrix completion
setting. Then the estimator ${\hat A}^\lambda$ has a particularly
simple form; it is obtained from the matrix $
\frac{m_1m_2}{n}\sum_{i=1}^n Y_i X_i$ by soft thresholding of its
singular values. One of the main results of this paper is to show
that our estimators are rate optimal (up to logarithmic factors)
under the Frobenius error for a simple class of matrices ${\cal
A}(r,a)$ defined by two restrictions: the rank of $A_0$ is not
larger than given $r$ and all the entries of $A_0$ are bounded in
absolute value by a constant $a$. This rather intuitive class has
been first considered in \cite{Keshavan}. However, the construction
of the estimator in \cite{Keshavan} requires the exact knowledge of
${\rm rank}( A_0)$ and the upper bound on the Frobenius error
obtained in \cite{Keshavan} is suboptimal (see the details in
Section \ref{sec:matrix_compl}). The recent paper
\cite{Gaiffas_Lecue} obtains suboptimal bounds of "slow rate" type
for matrix completion while \cite{Kol10} focuses on complex-valued
Hermitian matrices with nuclear norm equal to~1, which is motivated
by density matrix estimation problem in quantum state tomography.
These papers do not address the optimality issue. Optimal rates in
noisy matrix completion are derived in \cite{Rohde}, but on
different classes of matrices and with the empirical prediction
error rather than with the Frobenius error. Finally, \cite{nw10}
discusses the optimality issue for the Frobenius error on the
classes defined in terms of a "spikiness index" of $A_0$, which are
not related to ${\cal A}(r,a)$, and suggests estimators that require
prior knowledge about this index.

The main contributions of this paper are the following. In Section
\ref{oracle} we derive a general oracle inequality for the
prediction error $\|{\hat A}^\lambda - A_0\|_{L_2(\Pi)}^2$. This
oracle inequality is sharp, i.e., with leading constant 1, both in
the case of "slow rate" (for matrices $A_0$ with small nuclear norm)
and in the case of "fast rate" (for matrices $A_0$ with small rank).
As a particular instance of this general result, in Section
\ref{sec:matrix_compl} we obtain an oracle inequality for the matrix
completion problem. In Section \ref{lower bounds}, we establish
minimax lower bounds showing that the rates for matrix completion
obtained in Section \ref{sec:matrix_compl} are optimal up to a
logarithmic factor. In Section \ref{sec:discussion}, we briefly
discuss some other implications and extensions of our method.
Finally, Section \ref{sto} is devoted to the control of the
stochastic term appearing in the proof of the upper bound.

\section{General oracle inequalities}\label{oracle}

We recall first some basic facts about matrices. Let
$A\in\R^{m_1\times m_2}$ be a rectangular matrix, and let $r={\rm
rank} (A) \leq \min(m_1,m_2)$ denote its rank. The singular value
decomposition (SVD) of $A$ has the form: $A=\sum_{j=1}^r \sigma_j(A)
u_j v_j^\top$ with orthonormal vectors $u_1,\dots, u_r\in {\mathbb
R}^{m_1}$, orthonormal vectors $v_1,\ldots,v_r\in {\mathbb R}^{m_2}$
and real numbers $\sigma_1(A) \ge\dots\ge \sigma_r(A)>0$ (the
singular values of $A$). The pair of linear vector spaces
$(S_1,S_2)$ where $S_1$ is the linear span of $\{u_1,\dots, u_r\}$
and $S_2$ is the linear span of $\{v_1,\dots, v_r\}$ will be called
the \it support \rm of $A.$ We will denote by $S_j^{\perp}$ the
orthogonal complements of $S_j$, $j=1,2$, and by $P_{S}$ the
projector on the linear vector subspace $S$ of ${\mathbb R}^{m_j}$,
$j=1,2$.

The Schatten-$p$ (quasi-)norm $\| A \|_p$  of matrix $A$ is defined
by
$$
\| A \|_p = \left( \sum_{j=1}^{\min(m_1,m_2)}\sigma_j(A)^p
\right)^{1/p} \ \text{for} \ 0< p<\infty,\quad \text{and}\quad
\|A\|_{\infty}=\sigma_1(A).
$$
Recall the well-known {\it trace duality} property:
$$
\left| \mathrm{tr}(A^\top B) \right| \leq \|A\|_1 \|B\|_{\infty},
\quad \forall A,B\in \R^{m_1\times m_2}.
$$
We will also use the fact that the subdifferential of the convex
function $A\mapsto \|A\|_1$ is the following set of matrices:
\begin{equation}
\label{subdiff}
\partial \|A\|_1=\Bigl\{\sum_{j=1}^r u_j v_j^\top +
P_{S_1^{\perp}}W P_{S_2^{\perp}}:\ \|W\|_\infty\leq 1 \Bigr\}
\end{equation}
(cf. \cite{watson}). Define the random matrix
\begin{equation}\label{randM}
 {\bf M}=
\frac{1}{n}\sum_{i=1}^n (Y_i X_i-{\mathbb E}(Y_iX_i)).
\end{equation}
We will need the following assumption on the distribution of the
matrices $X_i$.
\begin{assumption}\label{design_assumption}
There exists a constant $\mu>0$ such that, for all matrices $A\in
{\mathbb A}-{\mathbb A}:=\{A_1-A_2: A_1,A_2\in \mathbb A\}$,
$$
\|A\|_{L_2(\Pi)}^2\geq {\mu}^{-2}\|A\|_2^2.
$$
\end{assumption}
As discussed in the Introduction, Assumption \ref{design_assumption}
is satisfied, often with equality and for ${\mathbb A}={\mathbb
A}-{\mathbb A}= \R^{m_1\times m_2}$, in several interesting
examples. The next theorem plays the key role in what follows.
\begin{theorem}\label{th:main}
Let $\mathbb A\subseteq \R^{m_1\times m_2}$ be any set of matrices.
If $\lambda \geq 2\|{\bf M}\|_{\infty},$ then
\begin{equation}
\label{first} \|{\hat A}^\lambda-A_0\|_{L_2(\Pi)}^2 \leq \inf_{A\in
{\mathbb A}}\Bigl[\|A-A_0\|_{L_2(\Pi)}^2 + 2 \lambda
\|A\|_1\Bigr]\,.
\end{equation}
If, in addition, $\mathbb A$ is a convex set and Assumption 1 is
satisfied, then
\begin{equation}
\label{third}
\|{\hat A}^\lambda-A_0\|_{L_2(\Pi)}^2 \leq
\inf_{A\in {\mathbb A}}\Bigl[\|A-A_0\|_{L_2(\Pi)}^2
+\left(\frac{1+\sqrt{2}}{2}\right)^2\mu^2\lambda^2 {\rm
rank}(A)\Bigr]\,.
\end{equation}
Furthermore, in this case for all $A\in {\mathbb A}$ with support
$(S_1,S_2)$,
\begin{eqnarray}
\label{second}
&&\|{\hat A}^\lambda-A_0\|_{L_2(\Pi)}^2+ (\lambda-2\|{\bf
M}\|_{\infty})
\|P_{S_1^{\perp}}{\hat A}^\lambda P_{S_2^{\perp}}\|_1 \\
&& \quad \leq \nonumber \|A-A_0\|_{L_2(\Pi)}^2
+\left(\frac{1+\sqrt{2}}{2}\right)^2\mu^2\lambda^2 {\rm rank}(A).
\end{eqnarray}
\end{theorem}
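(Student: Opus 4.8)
The plan is to exploit an identity that converts the random linear term into an inner product with $A_0$. Writing $S=\frac1n\sum_{i=1}^n Y_iX_i$ and using $\E(Y_i\mid X_i)=\mathrm{tr}(X_i^\top A_0)=\langle A_0,X_i\rangle$, one checks that for every matrix $B$ the average of $\langle \E(Y_iX_i),B\rangle$ equals $\langle A_0,B\rangle_{L_2(\Pi)}$, so that $\langle S,B\rangle=\langle\mathbf{M},B\rangle+\langle A_0,B\rangle_{L_2(\Pi)}$. Substituting this and completing the square shows that minimizing $L_n$ over $\mathbb A$ is equivalent to minimizing $\|A-A_0\|_{L_2(\Pi)}^2-2\langle\mathbf{M},A\rangle+\lambda\|A\|_1$. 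For the first bound (\ref{first}) I would simply write $L_n(\hat A^\lambda)\le L_n(A)$ for an arbitrary competitor $A\in\mathbb A$, move the quadratic terms to one side, and bound $2\langle\mathbf{M},\hat A^\lambda-A\rangle\le 2\|\mathbf{M}\|_\infty(\|\hat A^\lambda\|_1+\|A\|_1)$ by trace duality; the hypothesis $\lambda\ge 2\|\mathbf{M}\|_\infty$ then absorbs the nuclear-norm terms and yields (\ref{first}). No convexity is needed at this stage.

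For the sharp bounds (\ref{third}) and (\ref{second}) I would instead use first-order optimality. Since $L_n$ is convex and $\mathbb A$ is convex, there exists a subgradient $\hat V\in\partial\|\hat A^\lambda\|_1$ such that, for all $A\in\mathbb A$,
$$2\langle\hat A^\lambda,A-\hat A^\lambda\rangle_{L_2(\Pi)}-2\langle S,A-\hat A^\lambda\rangle+\lambda\langle\hat V,A-\hat A^\lambda\rangle\ge 0.$$
Inserting the decomposition of $S$, setting $\Delta=\hat A^\lambda-A$, and applying the polarization identity $2\langle\hat A^\lambda-A_0,A-\hat A^\lambda\rangle_{L_2(\Pi)}=\|A-A_0\|_{L_2(\Pi)}^2-\|\hat A^\lambda-A_0\|_{L_2(\Pi)}^2-\|\Delta\|_{L_2(\Pi)}^2$ produces the master inequality
$$\|\hat A^\lambda-A_0\|_{L_2(\Pi)}^2+\|\Delta\|_{L_2(\Pi)}^2\le\|A-A_0\|_{L_2(\Pi)}^2+2\langle\mathbf{M},\Delta\rangle-\lambda\langle\hat V,\Delta\rangle.$$
The ``bonus'' term $\|\Delta\|_{L_2(\Pi)}^2$ on the left is what ultimately forces the leading constant to be $1$.

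Controlling the two remaining terms is the crux. By monotonicity of the subdifferential of the nuclear norm I may replace $\hat V$ by any $V\in\partial\|A\|_1$; using the description (\ref{subdiff}) I would take $V=\sum_j u_jv_j^\top+P_{S_1^\perp}WP_{S_2^\perp}$ with $W$ chosen (via trace duality, $\|W\|_\infty\le1$) so that $\langle P_{S_1^\perp}WP_{S_2^\perp},\Delta\rangle=\|P_{S_1^\perp}\Delta P_{S_2^\perp}\|_1$, giving $-\lambda\langle\hat V,\Delta\rangle\le-\lambda\langle\sum_j u_jv_j^\top,\Delta\rangle-\lambda\|\Delta'\|_1$ with $\Delta'=P_{S_1^\perp}\Delta P_{S_2^\perp}$ and $\Delta''=\Delta-\Delta'$. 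Trace duality then bounds $2\langle\mathbf{M},\Delta\rangle\le 2\|\mathbf{M}\|_\infty(\|\Delta''\|_1+\|\Delta'\|_1)$ and $-\lambda\langle\sum_j u_jv_j^\top,\Delta\rangle\le\lambda\|P_{S_1}\Delta P_{S_2}\|_1$, while the two $\|\Delta'\|_1$ contributions combine into $(\lambda-2\|\mathbf{M}\|_\infty)\|\Delta'\|_1\ge0$, which I carry to the left (this is exactly the extra term in (\ref{second}), and $\Delta'=P_{S_1^\perp}\hat A^\lambda P_{S_2^\perp}$ because $P_{S_1^\perp}AP_{S_2^\perp}=0$). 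The structural facts I would rely on are ${\rm rank}(\Delta'')\le 2\,{\rm rank}(A)$ and the Frobenius-orthogonality of $\Delta''$ and $\Delta'$, so that $\|\Delta''\|_1\le\sqrt{2r}\,\|\Delta\|_2$ and $\|P_{S_1}\Delta P_{S_2}\|_1\le\sqrt{r}\,\|\Delta\|_2$; after using $2\|\mathbf{M}\|_\infty\le\lambda$, summing these two estimates is precisely where the factor $1+\sqrt2$ appears. Assumption \ref{design_assumption} supplies $\|\Delta\|_2\le\mu\|\Delta\|_{L_2(\Pi)}$ (applied only to the low-rank part), bounding the right-hand side by $\|A-A_0\|_{L_2(\Pi)}^2+\lambda(1+\sqrt2)\sqrt{r}\,\mu\,\|\Delta\|_{L_2(\Pi)}$.

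To finish I would complete the square: $\lambda(1+\sqrt2)\sqrt{r}\,\mu\,\|\Delta\|_{L_2(\Pi)}\le\|\Delta\|_{L_2(\Pi)}^2+\tfrac14\lambda^2(1+\sqrt2)^2r\mu^2$, which cancels the bonus term and leaves exactly $\big(\tfrac{1+\sqrt2}{2}\big)^2\mu^2\lambda^2{\rm rank}(A)$. Combined with the retained $(\lambda-2\|\mathbf{M}\|_\infty)\|P_{S_1^\perp}\hat A^\lambda P_{S_2^\perp}\|_1$ this is (\ref{second}); dropping that nonnegative term and taking the infimum over $A\in\mathbb A$ yields (\ref{third}). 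I expect the main obstacle to be the sharp bookkeeping in the third step — selecting the optimal $V$, establishing the rank-$2r$ bound for $\Delta''$, and splitting the nuclear-norm pieces so the isometry touches only the low-rank component — since this is exactly where both the unit leading constant and the precise factor $\big(\tfrac{1+\sqrt2}{2}\big)^2$ are won or lost.
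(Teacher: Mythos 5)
Your proposal is correct and follows essentially the same route as the paper's own proof: the same first-order optimality condition with a subgradient $\hat V\in\partial\|\hat A^\lambda\|_1$, the same monotonicity argument replacing $\hat V$ by $V\in\partial\|A\|_1$ with $W$ chosen by trace duality, the same polarization identity producing the $\|\hat A^\lambda-A\|_{L_2(\Pi)}^2$ "bonus" term, the same splitting of $\langle \mathbf{M},\hat A^\lambda-A\rangle$ into a rank-$2\,\mathrm{rank}(A)$ part and an orthogonal part, and the same completion of the square yielding the constant $\bigl(\tfrac{1+\sqrt{2}}{2}\bigr)^2$. The only cosmetic difference is that you bound the low-rank piece of the stochastic term by trace duality together with $\|\mathcal{P}_A(\hat A^\lambda-A)\|_1\le\sqrt{2\,\mathrm{rank}(A)}\,\|\hat A^\lambda-A\|_2$, while the paper uses Cauchy--Schwarz together with $\|\mathcal{P}_A(\mathbf{M})\|_2\le\sqrt{2\,\mathrm{rank}(A)}\,\|\mathbf{M}\|_\infty$; the two give the identical estimate.
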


\begin{proof} It follows from the definition of the estimator $\hat
A$ that, for all $A\in {\mathbb A}$,
$$L_n({\hat A}^\lambda)=\|{\hat A}^\lambda\|_{L_2(\Pi)}^2 -\biggl\langle
\frac{2}{n}\sum_{i=1}^n Y_i X_i, {\hat A}^\lambda \biggr\rangle +
\lambda \|{\hat A}^\lambda\|_1 \leq
$$
$$
\|A\|_{L_2(\Pi)}^2 -\biggl\langle \frac{2}{n}\sum_{i=1}^n Y_i X_i, A
\biggr\rangle + \lambda \|A\|_1= L_n(A).
$$
Also, note that
$$
\frac{1}{n}\sum_{i=1}^n {\mathbb
E}(Y_iX_i)=\frac{1}{n}\sum_{i=1}^n{\mathbb E}\big(\langle
A_0,X_i\rangle X_i\big)\ \ {\rm and}\ \
\frac{1}{n}\sum_{i=1}^n\langle {\mathbb E}(Y_iX_i),A\rangle =
\langle A_0, A\rangle_{L_2(\Pi)}.
$$
Therefore, we have
$$
\|{\hat A}^\lambda\|_{L_2(\Pi)}^2 -2 \langle {\hat A}^\lambda,A_0
\rangle_{L_2(\Pi)} \leq \|A\|_{L_2(\Pi)}^2 -2 \langle
A,A_0\rangle_{L_2(\Pi)} +
$$
$$
\biggl\langle \frac{2}{n}\sum_{i=1}^n (Y_i X_i-{\mathbb E}(Y_iX_i)),
{\hat A}^\lambda-A \biggr\rangle + \lambda (\|A\|_1 -\|{\hat
A}^\lambda\|_1),
$$
which implies, due to the trace duality,
$$
\|{\hat A}^\lambda-A_0\|_{L_2(\Pi)}^2 \leq \|A-A_0\|_{L_2(\Pi)}^2
+2\Delta \|{\hat A}^\lambda-A\|_1 + \lambda (\|A\|_1 -\|{\hat
A}^\lambda\|_1),
$$
where we set for brevity $\Delta=\|{\bf M}\|_{\infty}$. Under the
assumption $\lambda\geq 2\Delta$ this yields
$$
\|{\hat A}^\lambda-A_0\|_{L_2(\Pi)}^2 \leq \|A-A_0\|_{L_2(\Pi)}^2 +
\lambda(\|{\hat A}^\lambda-A\|_1+ \|A\|_1 -\|{\hat A}^\lambda\|_1)
\leq \|A-A_0\|_{L_2(\Pi)}^2 + 2 \lambda \|A\|_1,
$$
and the bound (\ref{first}) follows.

To prove the remaining bounds, note that a necessary condition of
extremum in problem (\ref{ERM}) implies that there exists $\hat V\in
\partial \|{\hat A}^\lambda\|_1$ such that, for all $A\in {\mathbb
A}$,
\begin{equation}
\label{extremum} 2\langle {\hat A}^\lambda, \hat
A^{\lambda}-A\rangle_{L_2(\Pi)}
-\biggl\langle\frac{2}{n}\sum_{i=1}^n Y_i X_i, {\hat A}^\lambda-A
\biggr\rangle + \lambda \langle \hat V, \hat A^{\lambda}-A\rangle
\leq 0.
\end{equation}
Indeed, since $\hat A^{\lambda}$ is a minimizer of $L_n(A)$ in
${\mathbb A},$ there exists a matrix $B\in \partial L_n(\hat
A^{\lambda})$ such that $-B$ belongs to the normal cone of ${\mathbb
A}$ at the point $\hat A^{\lambda}$ (cf. \cite{Aubin}, Chapter 4,
Section 2, Corollary 6). It is easy to see that $B$ can be
represented as follows
$$
B=2 \int_{{\mathbb R}^{m_1\times m_2}}\langle{\hat A}^\lambda,
X\rangle X\Pi(dX) -\frac{2}{n}\sum_{i=1}^n Y_i X_i + \lambda \hat V,
$$
where $\hat V\in \partial\|\hat A^{\lambda}\|_1.$ The condition that
$-B$ belongs to the normal cone at the point $\hat A^{\lambda}$
implies that $\langle B, \hat A^{\lambda}-A\rangle \leq 0,$ and
(\ref{extremum}) follows.

Consider an arbitrary $A\in {\mathbb A}$ of rank $r$ with spectral
representation $A=\sum_{j=1}^r \sigma_j u_j v_j^\top$ and with
support $(S_1,S_2).$ It follows from (\ref{extremum}) that
\begin{equation}
\label{extremum_A} 2\langle {\hat A}^\lambda-A_0, \hat
A^\lambda-A\rangle_{L_2(\Pi)} +\lambda \langle \hat V-V,{\hat
A}^\lambda-A\rangle \leq -\lambda \langle V,{\hat
A}^\lambda-A\rangle +2\langle {\bf M}, {\hat A}^\lambda-A \rangle
\end{equation}
for an arbitrary $V\in \partial \|A\|_1.$ By monotonicity of
subdifferentials of convex functions, $\langle \hat V-V,{\hat
A}^\lambda-A\rangle \geq 0.$ On the other hand, by (\ref{subdiff}),
the following representation holds
$$
V=\sum_{j=1}^r u_j v_j^\top + P_{S_1^{\perp}}W P_{S_2^{\perp}},
$$
where $W$ is an arbitrary matrix with $\|W\|_\infty\leq 1.$ It
follows from the trace duality 
that there exists $W$ with $\|W\|_{\infty}\leq 1$ such that
$$
\langle P_{S_1^{\perp}}W P_{S_2^{\perp}}, \hat A^{\lambda}-A\rangle
= \langle P_{S_1^{\perp}}W P_{S_2^{\perp}}, \hat A^{\lambda}\rangle
=\langle W , P_{S_1^{\perp}}\hat A^{\lambda}P_{S_2^{\perp}}\rangle =
\|P_{S_1^{\perp}}\hat A^{\lambda}P_{S_2^{\perp}}\|_1,
$$
where in the first equality we used that $A$ has the support
$(S_1,S_2).$ For this particular choice of $W,$ (\ref{extremum_A})
implies that
\begin{equation}
\label{extremum_A'} 2\langle {\hat A}^\lambda-A_0, \hat
A^\lambda-A\rangle_{L_2(\Pi)} +\lambda \|P_{S_1^{\perp}}\hat
A^{\lambda}P_{S_2^{\perp}}\|_1 \leq -\lambda \biggl\langle
\sum_{j=1}^r u_j v_j^\top,{\hat A}^\lambda-A\biggr\rangle +2\langle
{\bf M}, {\hat A}^\lambda-A \rangle.
\end{equation}
Using the identity
\begin{equation}\label{identity}
2\langle {\hat A}^\lambda-A_0, {\hat A}^\lambda-A\rangle_{L_2(\Pi)}=
\|{\hat A}^\lambda-A_0\|_{L_2(\Pi)}^2+ \|{\hat
A}^\lambda-A\|_{L_2(\Pi)}^2- \|A-A_0\|_{L_2(\Pi)}^2
\end{equation}
and the facts that
\begin{equation}\label{uv}\|\sum_{j=1}^r
u_j v_j^\top\|_\infty=1,\quad \biggl\langle \sum_{j=1}^r u_j
v_j^\top,{\hat A}^\lambda-A\biggr\rangle = \biggl\langle
\sum_{j=1}^r u_j v_j^\top,P_{S_1}({\hat
A}^\lambda-A)P_{S_2}\biggr\rangle
\end{equation}
 we deduce from
(\ref{extremum_A'}) that
\begin{eqnarray}
\label{extremum_B} && \|{\hat A}^\lambda-A_0\|_{L_2(\Pi)}^2+ \|{\hat
A}^\lambda-A\|_{L_2(\Pi)}^2 +\lambda \|P_{S_1^{\perp}} {\hat
A}^\lambda P_{S_2^{\perp}}\|_1 \leq \nonumber
\\
&& \|A-A_0\|_{L_2(\Pi)}^2 +\lambda \|P_{S_1}({\hat
A}^\lambda-A)P_{S_2}\|_1 +2 \langle {\bf M}, {\hat A}^\lambda-A
\rangle.
\end{eqnarray}
To provide an upper bound on $2\langle {\bf M}, {\hat A}^\lambda-A
\rangle $ we use the following decomposition
\begin{eqnarray*} \langle {\bf
M}, {\hat A}^\lambda-A \rangle &=& \langle {\cal P}_A({\bf M}),
{\hat A}^\lambda-A \rangle
+ \langle P_{S_1^{\perp}}{\bf M}P_{S_2^{\perp}},{\hat A}^\lambda-A \rangle\\
&=& \langle {\cal P}_A({\bf M}), {\cal P}_A({\hat A}^\lambda-A)
\rangle + \langle P_{S_1^{\perp}}{\bf M}P_{S_2^{\perp}},{\hat
A}^\lambda\rangle,
\end{eqnarray*}
where ${\cal P}_A({\bf M})= {\bf M} -P_{S_1^{\perp}}{\bf
M}P_{S_2^{\perp}}$. This implies, due to the trace duality,
\begin{eqnarray}\label{M0}
2\big|\langle {\bf M}, {\hat A}^\lambda-A \rangle\big| &\leq&
\Lambda \|{\cal P}_A({\hat A}^\lambda-A)\|_2 + \Gamma
\|P_{S_1^{\perp}} {\hat A}^\lambda P_{S_2^{\perp}}\|_1\\
&\leq& \Lambda \|{\hat A}^\lambda-A\|_2 + \Gamma \|P_{S_1^{\perp}}
{\hat A}^\lambda P_{S_2^{\perp}}\|_1,\nonumber
\end{eqnarray}
where
\begin{equation}\label{2.10a}
\Lambda=2\|{\cal P}_A({\bf M})\|_2, \ \ \
\Gamma=2\|P_{S_1^{\perp}}{\bf M} P_{S_2^{\perp}}\|_\infty.
\end{equation}
Note that
\begin{equation}\label{2.10b}
\Gamma \leq 2 \|{\bf M}\|_\infty=2\Delta.
\end{equation}
Since
\begin{equation}\label{2.10c}
{\cal P}_A({\bf M})=P_{S_1^{\perp}}{\bf M} P_{S_2}+P_{S_1}{\bf M}
\end{equation}
and ${\rm rank}(P_{S_j})\le {\rm rank}(A)$, $j=1,2$, we have
$$
\Lambda \leq 2\sqrt{{\rm rank}({\cal P}_A({\bf M}))} \,\|{\bf
M}\|_\infty \le 2\sqrt{2 \,{\rm rank}(A)}\,\Delta \le \sqrt{2 \,{\rm
rank}(A)}\, \lambda.
$$
Due to the fact that
\begin{equation}\label{vnutr}
\|P_{S_1}(\hat A^{\lambda}-A)P_{S_2}\|_1 \leq
\sqrt{\mathrm{rank}(A)}\|P_{S_1}(\hat A^{\lambda}-A)P_{S_2}\|_2 \leq
\sqrt{\mathrm{rank}(A)}\|\hat A^{\lambda}-A\|_2
\end{equation}
and to Assumption 1, it follows from (\ref{extremum_B}) and
(\ref{M0}) that
\begin{eqnarray}
\label{extremum_C}  \nonumber &&\|{\hat
A}^\lambda-A_0\|_{L_2(\Pi)}^2+ \|{\hat A}^\lambda-A\|_{L_2(\Pi)}^2+
\lambda \|P_{S_1^{\perp}}{\hat A}^\lambda P_{S_2^{\perp}}\|_1 \\
&& \quad \leq \nonumber \|A-A_0\|_{L_2(\Pi)}^2 +\mu(\lambda
\sqrt{{\rm rank}(A)}+ \Lambda)\|{\hat A}^\lambda-A\|_{L_2(\Pi)}
\nonumber
\\
&& \quad \ \ \  + \ \Gamma\|P_{S_1^{\perp}}{\hat A}^\lambda
P_{S_2^{\perp}}\|_1.
\end{eqnarray}
Using the above bounds on $\Lambda$ and $\Gamma ,$ we obtain from
(\ref{extremum_C}) that
\begin{eqnarray*}
\label{extremum_C1}  \nonumber &&\|{\hat
A}^\lambda-A_0\|_{L_2(\Pi)}^2+ \|{\hat A}^\lambda-A\|_{L_2(\Pi)}^2+
(\lambda-2\Delta) \|P_{S_1^{\perp}}{\hat A}^\lambda P_{S_2^{\perp}}\|_1 \\
&& \quad \leq \|A-A_0\|_{L_2(\Pi)}^2 + (1+\sqrt{2})\mu \lambda
\sqrt{{\rm rank}(A)}\|{\hat A}^\lambda-A\|_{L_2(\Pi)}
\end{eqnarray*}
which implies
\begin{eqnarray*}
\label{extremum_C2}  \nonumber &&\|{\hat
A}^\lambda-A_0\|_{L_2(\Pi)}^2+ (\lambda-2\Delta) \|P_{S_1^{\perp}}{\hat A}^\lambda P_{S_2^{\perp}}\|_1 \\
&& \quad \leq \|A-A_0\|_{L_2(\Pi)}^2 +
\frac{1}{4}(1+\sqrt{2})^2\mu^2 \lambda^2 {\rm rank}(A).
\end{eqnarray*}
\end{proof}

The following immediate corollary of Theorem \ref{th:main} provides
a bound for the Frobenius error.
\begin{corollary}
Let $\mathbb A$ be a convex subset of $m_1\times m_2$ matrices
containing $A_0$, and let Assumption \ref{design_assumption} be
satisfied. If $\lambda \geq 2\|{\bf M}\|_{\infty},$ then
\begin{equation}
\label{first1} \|{\hat A}^\lambda-A_0\|_2^2 \leq  \lambda\mu^2
\min\left\{2 \|A_0\|_1, \
\left(\frac{1+\sqrt{2}}{2}\right)^2\lambda\mu^2 \, {\rm rank}(A_0)
\right\}\,.
\end{equation}
\end{corollary}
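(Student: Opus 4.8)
The plan is to derive the Frobenius bound (\ref{first1}) directly from the two prediction-error bounds (\ref{first}) and (\ref{third}) of Theorem \ref{th:main}, using Assumption \ref{design_assumption} to convert the $L_2(\Pi)$ norm into the Frobenius norm. Since this is billed as an immediate corollary, I expect the argument to be a short substitution rather than a genuinely new computation.

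First I would exploit the hypothesis $A_0\in\mathbb A$ by making the particular choice $A=A_0$ inside the two infima in (\ref{first}) and (\ref{third}). For this choice the approximation term vanishes, $\|A-A_0\|_{L_2(\Pi)}^2=\|A_0-A_0\|_{L_2(\Pi)}^2=0$, so the infima are bounded above by their values at $A_0$ and the two inequalities collapse to
$$
\|\hat A^\lambda - A_0\|_{L_2(\Pi)}^2 \leq 2\lambda\|A_0\|_1
\quad\text{and}\quad
\|\hat A^\lambda - A_0\|_{L_2(\Pi)}^2 \leq \left(\frac{1+\sqrt 2}{2}\right)^2 \mu^2\lambda^2\,{\rm rank}(A_0).
$$
Hence the left-hand side is bounded by the minimum of these two quantities.

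Next I would pass from the prediction norm to the Frobenius norm. Because $\mathbb A$ is convex and both $\hat A^\lambda$ and $A_0$ lie in $\mathbb A$, the difference $\hat A^\lambda-A_0$ belongs to $\mathbb A-\mathbb A$, so Assumption \ref{design_assumption} applies and gives $\mu^{-2}\|\hat A^\lambda-A_0\|_2^2 \leq \|\hat A^\lambda-A_0\|_{L_2(\Pi)}^2$. Combining this with the previous display and multiplying through by $\mu^2$ yields
$$
\|\hat A^\lambda - A_0\|_2^2 \leq \mu^2\min\left\{2\lambda\|A_0\|_1,\ \left(\frac{1+\sqrt 2}{2}\right)^2\mu^2\lambda^2\,{\rm rank}(A_0)\right\},
$$
which is exactly (\ref{first1}) after pulling the positive factor $\lambda$ out of the minimum. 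The only point requiring any care — and the sole place where convexity of $\mathbb A$ and the containment $A_0\in\mathbb A$ are actually used — is verifying $\hat A^\lambda-A_0\in\mathbb A-\mathbb A$ so that Assumption \ref{design_assumption} may legitimately be invoked; everything else is a direct substitution, so I anticipate no substantive obstacle.
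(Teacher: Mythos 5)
Your proof is correct and is exactly the argument the paper intends: the corollary is stated as an immediate consequence of Theorem \ref{th:main}, obtained by taking $A=A_0$ in (\ref{first}) and (\ref{third}) and then applying Assumption \ref{design_assumption} to $\hat A^\lambda-A_0\in\mathbb A-\mathbb A$. One trivial remark: membership of $\hat A^\lambda-A_0$ in $\mathbb A-\mathbb A$ needs only that both matrices lie in $\mathbb A$ (no convexity required); convexity of $\mathbb A$ is instead what licenses invoking (\ref{third}) from the theorem in the first place.
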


%

Next, we consider a version of Theorem \ref{th:main} under weaker
assumptions which are akin to Restricted Eigenvalue condition in
sparse estimation of vectors. For simplicity, we will do it only
when the domain ${\mathbb A}$ of minimization in (\ref{ERM}) is a
linear subspace of ${\mathbb R}^{m_1\times m_2}.$ Recall that, given
$A\in {\mathbb A}$ with support $(S_1,S_2),$ we denote
$$
{\cal P}_A (B):=B-P_{S_1^{\perp}}BP_{S_2^{\perp}},\ {\cal
P}_A^{\perp}(B):=P_{S_1^{\perp}}BP_{S_2^{\perp}}, \ B\in {\mathbb
R}^{m_1\times m_2},
$$
and, for $c_0\geq 0,$ define the following cone of matrices:
$$
{\mathbb C}_{A,c_0}:=\Bigl\{B\in {\mathbb A}: \|{\cal
P}_A^{\perp}(B)\|_1\leq c_0 \|{\cal P}_A(B)\|_1\Bigr\}.
$$
Finally, define
$$
\mu_{c_0}(A):=\inf\Bigl\{\mu'>0: \|{\cal P}_A (B)\|_2 \leq \mu'
\|B\|_{L_2(\Pi)}, \,\forall B\in {\mathbb C}_{A,c_0}\Bigr\}.
$$
Note that $\mu_{c_0}(A)$ is a nondecreasing function of $c_0 .$ For
$c_0=+\infty,$ the quantity $\mu_{\infty}(A)$ has a simple meaning:
it is equal to the norm of the linear transformation $B\mapsto {\cal
P}_A(B)$ from the space ${\mathbb A}$ equipped with the
$L_2(\Pi)$-norm into the space of all matrices equipped with the
Frobenius norm. For $c_0=0,$ $\mu_0(A)$ is the norm of the same
linear transformation restricted to the subspace of ${\mathbb A}$
consisting of all matrices $B\in {\mathbb A}$ with ${\cal
P}_A^{\perp}(B)=0.$ We are more interested in the intermediate
values, $c_0 \in (0,+\infty).$ In this case, $\mu_{c_0}(A)$ is the
``norm'' of the linear mapping ${\cal P}_A$ restricted to the cone
of matrices $B$ for which ${\cal P}_A(B)$ is the dominant part and
${\cal P}_A^{\perp}(B)$ is ``small''. Note that the rank of ${\cal
P}_A(B)$ is not larger than $2{\rm rank}(A),$ so, when the rank of
$A$ is small, the matrices in ${\mathbb C}_{A,c_0}$ are
approximately ``low-rank''. The quantities of the same flavor have
been previously used in the literature on Lasso, Dantzig selector
and other methods of sparse estimation of vectors. In these
problems, they can be expressed in terms of ``restricted
eigenvalues" of certain Gram matrices, cf. the Restricted Eigenvalue
condition in \cite{BRT09} for the fixed design case and similar
distribution dependent conditions in \cite{kol_dantz} for the random
design case. Such conditions are also considered in \cite{negaban10}
for the matrix case. In what follows, we use the value $c_0=5$ and
set $ \mu(A):=\mu_5(A). $

\begin{theorem}\label{th:main_RE}
Let ${\mathbb A}$ be a linear subspace of ${\mathbb R}^{m_1\times
m_2}.$ If $\lambda \ge 3\|{\bf M}\|_\infty$, then
\begin{equation}
\label{eq:th:main_RE} \|{\hat A}^\lambda-A_0\|_{L_2(\Pi)}^2 \leq
\inf_{A\in {\mathbb A}}\Bigl[\|A-A_0\|_{L_2(\Pi)}^2 + \lambda^2
\mu^2(A){\rm rank}(A)\Bigr]\,.
\end{equation}
\end{theorem}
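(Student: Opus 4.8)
The plan is to reuse inequality (\ref{extremum_B}), which was derived in the proof of Theorem \ref{th:main} using only convexity of $\mathbb A$ and which never invoked Assumption \ref{design_assumption}; since a linear subspace is convex, (\ref{extremum_B}) is available verbatim. Fix $A\in\mathbb A$ with support $(S_1,S_2)$ and rank $r=\mathrm{rank}(A)$, and write $\Delta=\|\mathbf M\|_\infty$, $B=\hat A^\lambda-A$, and $v=\|\hat A^\lambda-A\|_{L_2(\Pi)}$. Because $\mathbb A$ is a linear subspace and $\hat A^\lambda,A\in\mathbb A$, we have $B\in\mathbb A$, so it is meaningful to ask whether $B$ lies in the cone $\mathbb C_{A,5}$; this is the only point at which the linear-subspace hypothesis enters. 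Put $a=\|\mathcal P_A(B)\|_1$ and $b=\|\mathcal P_A^\perp(B)\|_1=\|\mathcal P_A^\perp(\hat A^\lambda)\|_1$. Bounding the stochastic term in (\ref{extremum_B}) by plain trace duality, $2\langle\mathbf M,B\rangle\le 2\Delta a+2\Delta b$, and using $\|P_{S_1}BP_{S_2}\|_1\le a$, I obtain the master inequality $\|\hat A^\lambda-A_0\|_{L_2(\Pi)}^2+v^2+(\lambda-2\Delta)b\le\|A-A_0\|_{L_2(\Pi)}^2+(\lambda+2\Delta)a$.

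I would then split according to whether $B\in\mathbb C_{A,5}$. If $b>5a$ (so $B\notin\mathbb C_{A,5}$), the hypothesis $\lambda\ge 3\Delta$ is exactly what forces $5(\lambda-2\Delta)\ge\lambda+2\Delta$, whence $(\lambda-2\Delta)b>5(\lambda-2\Delta)a\ge(\lambda+2\Delta)a$. Discarding the nonnegative $v^2$, the master inequality then collapses to $\|\hat A^\lambda-A_0\|_{L_2(\Pi)}^2\le\|A-A_0\|_{L_2(\Pi)}^2$, which is stronger than required.

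In the complementary case $B\in\mathbb C_{A,5}$, the restricted quantity $\mu(A)=\mu_5(A)$ becomes usable: $\|\mathcal P_A(B)\|_2\le\mu(A)v$. Here I would instead bound the stochastic term through its Frobenius form (\ref{M0}), using $\Lambda\le 2\sqrt{2r}\,\Delta$ and $\Gamma\le 2\Delta$ from (\ref{2.10a})--(\ref{2.10b}), together with $\|P_{S_1}BP_{S_2}\|_1\le\sqrt r\,\|\mathcal P_A(B)\|_2\le\sqrt r\,\mu(A)v$ for the oracle term. Feeding these into (\ref{extremum_B}) and dropping $(\lambda-2\Delta)b\ge 0$ yields $\|\hat A^\lambda-A_0\|_{L_2(\Pi)}^2+v^2\le\|A-A_0\|_{L_2(\Pi)}^2+\sqrt r\,(\lambda+2\sqrt2\,\Delta)\mu(A)v$, and the elementary bound $cv\le v^2+\frac14 c^2$ cancels $v^2$, leaving $\|\hat A^\lambda-A_0\|_{L_2(\Pi)}^2\le\|A-A_0\|_{L_2(\Pi)}^2+\frac14 r(\lambda+2\sqrt2\,\Delta)^2\mu^2(A)$. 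Taking the infimum over $A\in\mathbb A$ then gives the statement. The delicate point — and the reason the particular constants $c_0=5$ and $\lambda\ge 3\Delta$ are chosen — is numerical: I must verify that with $\Delta\le\lambda/3$ the prefactor $\frac14(\lambda+2\sqrt2\,\Delta)^2/\lambda^2\le\frac14\bigl(1+\frac{2\sqrt2}{3}\bigr)^2=\frac{17+12\sqrt2}{36}$ is at most $1$, so the rank term carries leading constant exactly $1$, while the same inequality $\lambda\ge 3\Delta$ simultaneously produces the favorable sign in the non-cone case. Reconciling these two demands is the crux of the argument.
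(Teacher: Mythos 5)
Your proof is correct, and it runs on the same engine as the paper's: inequality (\ref{extremum_B}) (available here because a linear subspace is convex and Assumption \ref{design_assumption} was never used in deriving it, exactly as you observe), the refined stochastic bound (\ref{M0}) with $\Lambda\le 2\sqrt{2\,\mathrm{rank}(A)}\,\Delta$ and $\Gamma\le 2\Delta$, the threshold $\lambda\ge 3\Delta$ calibrated to the cone constant $c_0=5$, and the final step $cv\le v^2+c^2/4$ with the numerical check $\tfrac14\bigl(1+\tfrac{2\sqrt2}{3}\bigr)^2=\tfrac{17+12\sqrt2}{36}<1$. The one genuine structural difference is the case analysis. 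The paper splits on the sign of $\langle \hat A^\lambda-A_0,\hat A^\lambda-A\rangle_{L_2(\Pi)}$: when it is nonpositive, (\ref{identity}) gives the trivial bound; when it is positive, that term can be discarded from (\ref{extremum_A'}) to get (\ref{extremum_B1})--(\ref{222}), which \emph{certifies} that $\hat A^\lambda-A$ lies in ${\mathbb C}_{A,5}$, after which the main estimate applies. You instead split directly on cone membership: in the non-cone case $b>5a$, your master inequality --- which retains all terms of (\ref{extremum_B}) instead of discarding the inner product --- collapses by pure algebra to $\|\hat A^\lambda-A_0\|_{L_2(\Pi)}^2\le\|A-A_0\|_{L_2(\Pi)}^2$, precisely because $\lambda\ge 3\Delta$ forces $5(\lambda-2\Delta)\ge\lambda+2\Delta$. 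The two splits are logically related (the paper's argument shows, in contrapositive, that outside the cone the inner product must be nonpositive), and they yield identical constants; what your organization buys is that cone membership never has to be derived from the optimality conditions, while the paper's version is the standard restricted-eigenvalue-style argument establishing that the error matrix actually lives in the cone, which is slightly more informative about the estimator itself.
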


\begin{proof}
Fix $A\in\mathbb A$ with support $(S_1,S_2).$ If $\langle {\hat
A}^\lambda-A_0, {\hat A}^\lambda-A\rangle_{L_2(\Pi)}\le 0$, then we
trivially have $\|{\hat A}^\lambda-A_0\|_{L_2(\Pi)}^2 \leq
\|A-A_0\|_{L_2(\Pi)}^2$ in view of (\ref{identity}). Thus, assume
that $\langle {\hat A}^\lambda-A_0, {\hat
A}^\lambda-A\rangle_{L_2(\Pi)}> 0$. In this case,
(\ref{extremum_A'}) and an obvious modification of (\ref{uv}) imply
\begin{equation}\label{extremum_B1}
\lambda \|P_{S_1^{\perp}} {\hat A}^\lambda P_{S_2^{\perp}}\|_1 \leq
\lambda \|{\cal P}_A({\hat A}^\lambda-A)\|_1 +2 \langle {\bf M},
{\hat A}^\lambda-A \rangle.
\end{equation}
Now,
\begin{eqnarray}\label{extremum_B2} &&\langle {\bf
M}, {\hat A}^\lambda-A \rangle = \langle {\bf M}, {\cal P}_A(\hat
A^{\lambda}-A)\rangle + \langle {\bf M}, {\cal P}_{A}^{\perp}({\hat
A}^\lambda-A)\rangle
\nonumber\\
&&\quad \leq \|{\bf M}\|_{\infty}\left( \|{\cal P}_A({\hat
A}^\lambda-A)\|_1 +\|{\cal P}_A^{\perp}({\hat
A}^\lambda-A)\|_1\right)\,.
\end{eqnarray}
By (\ref{extremum_B1}) and (\ref{extremum_B2}),
\begin{equation}\label{222}
(\lambda - 2 \Delta) \|{\cal P}_A^{\perp}(\hat A^{\lambda}-A)\|_1
\leq (\lambda + 2\Delta) \|{\cal P}_A({\hat A}^\lambda-A)\|_1.
\end{equation}

For $\lambda \ge 3\Delta$, this yields
$$
\|{\cal P}_A^{\perp}({\hat A}^\lambda-A)\|_1 \leq 5 \|{\cal
P}_A({\hat A}^\lambda-A)\|_1,
$$
which implies that ${\hat A}^\lambda-A\in {\mathbb C}_{A,5},$ and
thus $\|{\cal P}_A({\hat A}^\lambda-A)\|_2\le \mu (A)\|{\hat
A}^\lambda-A\|_{L_2(\Pi)}$. Combining this inequality with
(\ref{extremum_B}), (\ref{M0}), (\ref{2.10a}), (\ref{2.10b}) and
using that $\lambda \ge 3\Delta$, after some algebra we get
\begin{eqnarray*}
&& \|{\hat A}^\lambda-A_0\|_{L_2(\Pi)}^2+ \|{\hat
A}^\lambda-A\|_{L_2(\Pi)}^2 +(\lambda/3) \|P_{S_1^{\perp}} {\hat
A}^\lambda P_{S_2^{\perp}}\|_1 \nonumber
\\
&&\quad \leq \|A-A_0\|_{L_2(\Pi)}^2 +(1+2\sqrt{2}/3)\mu(A)\lambda
\sqrt{{\rm rank}(A)} \, \|{\hat A}^\lambda-A\|_{L_2(\Pi)}
\\
&&\quad \leq  \|A-A_0\|_{L_2(\Pi)}^2 + \|{\hat
A}^\lambda-A\|_{L_2(\Pi)}^2 + \mu^2(A)\lambda^2 {\rm rank}(A).
\end{eqnarray*}
\end{proof}

As a simple example, consider the case when $m_1=m_2$, $\mathbb A$
is the space of all diagonal matrices, and $X_i$ also belong to
$\mathbb A$. Then the trace regression model (\ref{tracereg1})
becomes the usual linear regression model. The Schatten $p$-norms
are in this case equivalent to the $\ell_p$-norms with the operator
norm $\|\cdot\|_{\infty}$ being the $\ell_{\infty}$-norm and the
rank of matrix $A$ characterizing the sparsity of the corresponding
vector. The problem of minimizing the functional $L_n(A)$ over the
space $\mathbb A$ is a Lasso-type penalized empirical risk
minimization. In particular, it coincides with the standard Lasso if
all $X_i$ are non-random. Inequalities of Theorem \ref{th:main} and
(\ref{eq:th:main_RE}) become, in this case, sparsity oracle
inequalities for the Lasso-type estimators. It is noteworthy that
these inequalities are sharp (i.e., with leading constant $1$),
which was not achieved in the past work. The random matrix ${\bf M}$
is also diagonal and its norm $\|{\bf M}\|_{\infty}$ is just the
$\ell_{\infty}$-norm of the corresponding random vector, which is
the sum of independent random vectors. Hence, it is easy to provide
probabilistic bounds on $\|{\bf M}\|_{\infty}$ using, for instance,
the classical Bernstein inequality and the union bound. We give an
example of such an application of Theorem \ref{th:main_RE} in
Section \ref{subsec:lasso}.

\section{Upper bounds for matrix completion}\label{sec:matrix_compl}

In this section we consider implications of the general oracle
inequalities of Theorem \ref{th:main} for the model of USR matrix
completion. Thus, we assume that the matrices $X_i$ are i.i.d.
uniformly distributed in the matrix completion basis $\mathcal X,$
which implies that $\|A\|_{L_2(\Pi)}^2 = (m_1m_2)^{-1}\|A\|_2^2$ for
all matrices $A\in\R^{m_1\times m_2}$, and we set
$\mu=\sqrt{m_1m_2}$. The estimator ${\hat A}^\lambda$ is then
defined by (here and further on we set ${\mathbb A}={\mathbb
R}^{m_1\times m_2}$ in the case of matrix completion):
\begin{eqnarray}\nonumber
{\hat A}^\lambda &=&{\rm argmin}_{A\in {\mathbb R}^{m_1\times m_2}}
\left(\frac1{m_1m_2}\|A\|_2^2 -\biggl\langle \frac{2}{n}\sum_{i=1}^n
Y_i X_i, A \biggr\rangle + \lambda
\|A\|_1\right)\\
&=&{\rm argmin}_{A\in {\mathbb R}^{m_1\times m_2}} \Big(
\|A-{\mathbf X}\|_2^2 + \lambda m_1m_2\|A\|_1\Big),\label{ERM1}
\end{eqnarray}
where
$$
{\mathbf X}= \frac{m_1m_2}{n}\sum_{i=1}^n Y_i X_i.
$$
We can also write ${\hat A}^\lambda$ explicitly:
\begin{equation}\label{alambda}
{\hat A}^\lambda = \sum_{j}^{} \big(\sigma_j({\mathbf X})-\lambda
m_1m_2/2\big)_+ u_j({\mathbf X}) v_j({\mathbf X})^\top
\end{equation}
where $x_+=\max\{x,0\}$, $\sigma_j({\mathbf X})$ are the singular
values and $u_j({\mathbf X}), v_j({\mathbf X})$ are the left and
right singular vectors of ${\mathbf X}=\sum_{j=1}^{{\rm
rank}({\mathbf X})} \sigma_j({\mathbf X}) u_j({\mathbf X})
v_j({\mathbf X})^\top$. Thus, ${\hat A}^\lambda$ has a particularly
simple form; it is obtained by soft thresholding of singular values
in the SVD of ${\mathbf X}$. To see why (\ref{alambda}) gives the
solution of (\ref{ERM1}), note that, in view of (\ref{subdiff}), the
subdifferential of $F(A)=\|A-{\mathbf X}\|_2^2 + \lambda
m_1m_2\|A\|_1$ is the set of matrices
$$
\partial F(A)=\Bigl\{2(A-{\mathbf X}) + \lambda m_1m_2\left(\sum_{j=1}^r u_j v_j^\top +
P_{S_1^{\perp}}W P_{S_2^{\perp}}\right):\ \|W\|_\infty\leq 1
\Bigr\}\,,
$$
where $r, u_j, v_j, S_1, S_2$ correspond to the SVD of $A$. Since
$A\mapsto F(A)$ is strictly convex, the minimizer ${\hat A}^\lambda$
is unique, and the condition ${\bf 0}\in
\partial F({\hat A}^\lambda)$ is necessary and sufficient
characterization of the minimum, where ${\bf 0}$ is the zero
$m_1\times m_2$ matrix. Considering $$W=\sum_{j: \,\sigma_j({\mathbf
X})<\lambda m_1m_2/2} \Big(\frac{2\sigma_j({\mathbf X})}{\lambda
m_1m_2}- 1\Big) u_j({\mathbf X}) v_j({\mathbf X})^\top,$$ it is easy
to check that (\ref{alambda}) satisfies this condition.

We will see that the soft thresholding representation
(\ref{alambda}) helps to understand in an easy way some theoretical
properties of ${\hat A}^\lambda$. However, it may not be always
preferable for computational issues. Indeed, the standard techniques
of computation of the SVD can become numerically instable when the
dimension is high. On the other hand, we can always compute ${\hat
A}^\lambda$ from (\ref{ERM1}) using the methods of convex
programming free from this drawback.

In view of Theorem \ref{th:main}, to get the oracle inequalities in
a closed form it remains only to specify the value of regularization
parameter $\lambda$ such that $\lambda \geq 2\|{\bf M}\|_{\infty}$
with high probability. This requires some assumptions on the
distribution of $(X_i,Y_i)$, and the value of $\lambda$ will be
different under different assumptions. We will consider only the
following two cases of particular interest.
\begin{itemize}
\item {\it Sub-exponential noise and matrices with uniformly bounded
entries.} There exist constants $\sigma,c_1>0$, $\alpha\ge 1$ and
$\tilde c$ such that
\begin{equation}\label{subexp}
\max_{i=1,\dots,n}\E \exp
\left(\frac{|\xi_i|^\alpha}{\sigma^\alpha}\right) < \tilde c, \quad
\mathbb{E}\xi_i^2 \geq c_1 \sigma^2,\,\forall 1\leq i \leq n,
\end{equation}
and $\max_{i,j}|a_0(i,j)|\leq a$ for some constant $a$.
\item {\it Statistical learning setting.} There exists a constant
$\eta$ such that $\max_{i=1,\dots,n}|Y_i|\le \eta$ almost surely.
\end{itemize}
In both cases, we obtain the upper bounds for $\|{\bf M}\|_{\infty}$
(that we call the {\it stochastic error}) using the non-commutative
Bernstein inequalities, cf. Section \ref{sto}. The resulting values
of $\lambda$ and the corresponding oracle inequalities are given in
the next two theorems.

Set $m=m_1+m_2$. In what follows, we will denote by $C$ absolute
positive constants, possibly different on different occasions.

\begin{theorem}\label{th:completion_gaussian} Let $X_i$ be i.i.d.
uniformly distributed on $\mathcal X$, and the pairs $(X_i,Y_i)$ be
i.i.d.
 Assume that $\max_{i,j}|a_0(i,j)|\leq a$ for some
constant $a$, and that condition (\ref{subexp}) holds.  For $t>0,$
consider the regularization parameter $\lambda$ satisfying
\begin{equation}\label{eq:th:completion_gaussian}
\lambda \geq C(\sigma \vee a)\max\left\{\
\sqrt{\frac{t+\log(m)}{(m_1\wedge m_2)n}}\, , \
\frac{(t+\log(m))\log^{1/\alpha}(m_1\wedge m_2)}{n} \right\},
\end{equation}
where $C>0$ is a large enough constant that can depend only on
$\alpha,c_1,\tilde c$. Then with probability at least $1-3e^{-t}$ we
have
\begin{equation}\label{ora_ineq_completion}
\|{\hat A}^\lambda - A_0\|_{2}^2 \leq \|A - A_0\|_{2}^2 +
m_1m_2 \,\min\bigg\{2 \lambda  \|A\|_1, \
\left(\frac{1+\sqrt{2}}{2}\right)^2 m_1m_2 \lambda^2 \, {\rm
rank}(A) \bigg\}
\end{equation}
for all $A\in \mathbb R^{m_1\times m_2}$.
\end{theorem}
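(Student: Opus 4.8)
The plan is to obtain (\ref{ora_ineq_completion}) as a direct specialization of Theorem \ref{th:main}, so that the only genuine work lies in verifying the event $\{\lambda\ge 2\|{\bf M}\|_\infty\}$ with the stated probability. First I would record the deterministic reduction. For USR matrix completion we take ${\mathbb A}={\mathbb R}^{m_1\times m_2}$, which is convex and contains $A_0$; Assumption \ref{design_assumption} holds with equality and $\mu=\sqrt{m_1m_2}$; and $\|A\|_{L_2(\Pi)}^2=(m_1m_2)^{-1}\|A\|_2^2$ for every $A$. Hence, on the event ${\cal E}:=\{2\|{\bf M}\|_\infty\le\lambda\}$ both (\ref{first}) and (\ref{third}) hold. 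Fixing an arbitrary $A$ and using that the infima there are dominated by their values at this $A$, I would take the smaller of the two resulting bounds and multiply through by $m_1m_2$; since $m_1m_2\,\|\cdot\|_{L_2(\Pi)}^2=\|\cdot\|_2^2$ and $\mu^2=m_1m_2$, this turns every $L_2(\Pi)$-norm into a Frobenius norm and reproduces the extra factor $m_1m_2$ in the rank term, yielding exactly (\ref{ora_ineq_completion}) on ${\cal E}$, for all $A$.

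It then remains to show ${\mathbb P}({\cal E})\ge 1-3e^{-t}$ for $\lambda$ as in (\ref{eq:th:completion_gaussian}), i.e. that $\|{\bf M}\|_\infty>\lambda/2$ with probability at most $3e^{-t}$. This is the heart of the argument and the step I expect to be the main obstacle. Since ${\bf M}=n^{-1}\sum_i\bigl(Y_iX_i-{\mathbb E}(Y_iX_i)\bigr)$ is an average of i.i.d. centered random matrices, I would control $\|{\bf M}\|_\infty$ by a non-commutative (matrix) Bernstein inequality, as announced for Section \ref{sto}. Such an inequality needs two inputs: the variance parameter and a uniform bound on the summands.

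For the variance I would use that $X_i=e_j(m_1)e_k(m_2)^\top$ with $(j,k)$ uniform, whence $X_iX_i^\top=e_j(m_1)e_j(m_1)^\top$ and $X_i^\top X_i=e_k(m_2)e_k(m_2)^\top$. Writing $Y_i=a_0(j,k)+\xi_i$ and averaging over $(j,k)$ and $\xi_i$, both ${\mathbb E}\bigl[(Y_iX_i)(Y_iX_i)^\top\bigr]$ and ${\mathbb E}\bigl[(Y_iX_i)^\top(Y_iX_i)\bigr]$ are diagonal with operator norm of order $(a^2+\sigma^2)/(m_1\wedge m_2)$, using $\max_{i,j}|a_0(i,j)|\le a$ and the fact that ${\mathbb E}\xi_i^2$ is of order $\sigma^2$ by (\ref{subexp}). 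After dividing by $n$, the variance parameter is of order $(\sigma\vee a)^2/\bigl((m_1\wedge m_2)n\bigr)$, which is precisely what produces the first (sub-Gaussian) term $\sqrt{(t+\log m)/((m_1\wedge m_2)n)}$ in (\ref{eq:th:completion_gaussian}).

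The genuine difficulty is that the summands are unbounded: under (\ref{subexp}) the noise $\xi_i$, hence $Y_i$, is only sub-exponential of order $\alpha$, so the bounded-case matrix Bernstein inequality does not apply directly. I would handle this by truncating $Y_i$ at a level of order $\sigma\,\log^{1/\alpha}(m_1\wedge m_2)$: on the truncated part $\|Y_iX_i\|_\infty$ is controlled by this level, so the bounded Bernstein bound applies, while the discarded tail is handled by a union bound built from the sub-exponential moment in (\ref{subexp}). The truncation level is exactly what generates the factor $\log^{1/\alpha}(m_1\wedge m_2)$ and the second, $(t+\log m)/n$ term of (\ref{eq:th:completion_gaussian}); the three contributing events (the Bernstein bulk and the two tail/moment controls) each cost $e^{-t}$, which accounts for the $3e^{-t}$. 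Choosing the absolute constant $C$ large enough, depending only on $\alpha,c_1,\tilde c$, then forces $\lambda\ge 2\|{\bf M}\|_\infty$ off an event of probability at most $3e^{-t}$, completing the proof.
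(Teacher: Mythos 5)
Your deterministic reduction is exactly the paper's: with ${\mathbb A}={\mathbb R}^{m_1\times m_2}$, $\mu=\sqrt{m_1m_2}$ and the isometry $\|A\|_{L_2(\Pi)}^2=(m_1m_2)^{-1}\|A\|_2^2$, Theorem \ref{th:main} gives (\ref{ora_ineq_completion}) on the event $\{\lambda\ge 2\|{\bf M}\|_\infty\}$, and your computation of the variance parameter of order $(\sigma\vee a)^2/\bigl((m_1\wedge m_2)n\bigr)$ is also correct and matches Lemmas \ref{lem:tau1} and \ref{lem:tau2}. The gap is in the step you yourself flag as the heart of the matter: the treatment of the unbounded noise. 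Truncating at $T\asymp\sigma\log^{1/\alpha}(m_1\wedge m_2)$ and union bounding the tail events gives $\mathbb{P}(\max_i|\xi_i|>T)\le \tilde c\, n\, e^{-(T/\sigma)^\alpha}=\tilde c\, n/(m_1\wedge m_2)$, which is not $\le e^{-t}$; in the regime where the theorem has content ($n\gtrsim (m_1\vee m_2)\log (m)$) this bound even exceeds $1$. Making the union bound work forces $T\gtrsim \sigma\,(t+\log n)^{1/\alpha}$, and then Proposition \ref{prop:Bernstein_bounded} yields a second term of order $(\sigma\vee a)(t+\log n)^{1/\alpha}(t+\log m)/n$, which is strictly weaker than the claimed $(t+\log m)\log^{1/\alpha}(m_1\wedge m_2)/n$ whenever $m_1\wedge m_2\ll n$ (e.g.\ very rectangular matrices). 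In addition, truncation destroys centering: the bias $\|\E[\xi X\,{\bf 1}_{\{|\xi|>T\}}]\|$ does not decrease with $n$ and must itself be shown to be $O(\lambda)$, which again forces a larger $T$; your sketch does not address it.

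The paper avoids both problems by a different organization of the stochastic part: it splits ${\bf M}$ into the signal part $\frac1n\sum_i\bigl(\langle A_0,X_i\rangle X_i-\E\langle A_0,X\rangle X\bigr)$, which is bounded by $2a$ and handled by the bounded matrix Bernstein inequality (Proposition \ref{prop:Bernstein_bounded}, Lemma \ref{lem:tau2}), and the noise part, further decomposed as $\frac1n\sum_i\xi_i(X_i-\E X)$ plus the scalar term $\frac1n\sum_i\xi_i\,\E X$ (Lemma \ref{lem:tau1}). The matrix piece of the noise is controlled not by truncation but by the Orlicz-norm version of matrix Bernstein (Proposition \ref{prop:Bernstein_unbounded}, i.e.\ Proposition 2 of \cite{Kol10} extended to rectangular matrices by self-adjoint dilation), in which the factor multiplying $(t+\log m)/n$ is $U_Z^{(\alpha)}\log^{1/\alpha}\bigl(U_Z^{(\alpha)}/\sigma_Z\bigr)$. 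Since $U_Z^{(\alpha)}\lesssim\sigma$ while $\sigma_Z\asymp\sigma/\sqrt{m_1\wedge m_2}$ (the lower bound $\E\xi_i^2\ge c_1\sigma^2$ in (\ref{subexp}), which your argument never uses, exists precisely to lower bound $\sigma_Z$), this ratio-of-scales mechanism is what produces $\log^{1/\alpha}(m_1\wedge m_2)$ in (\ref{eq:th:completion_gaussian}); a per-observation truncation with a union bound provably cannot reach it. To complete your proof you would either have to prove or cite an inequality of that type, or settle for a weaker theorem with $\log^{1/\alpha}$ of $n$ in place of $\log^{1/\alpha}(m_1\wedge m_2)$.
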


\begin{theorem}\label{th:completion_bounded}
Let $X_i$ be i.i.d. uniformly distributed on $\mathcal X$. Assume
that $\max_{i=1,\dots,n}|Y_i|\le \eta$ almost surely for some
constant $\eta$. For $t>0$ consider the regularization parameter
$\lambda$ satisfying
\begin{equation}\label{eq:th:completion_bounded}
\lambda \geq 4 \eta\,\max\left\{  \sqrt{\frac{t+\log(m)}{(m_1\wedge
m_2)n}}\,, \ \frac{2(t+\log(m))}{n} \right\}\,.
\end{equation}
Then with probability at least $1-e^{-t}$ inequality
(\ref{ora_ineq_completion}) holds for
 all $A\in \mathbb R^{m_1\times m_2}$.
\end{theorem}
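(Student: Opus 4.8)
The plan is to reduce the theorem to a single probabilistic statement, namely that the prescribed $\lambda$ satisfies $\lambda\ge 2\|{\bf M}\|_\infty$ with probability at least $1-e^{-t}$, and then to verify this statement by means of the noncommutative Bernstein inequality of Section \ref{sto}. First I would observe that in the USR matrix completion setting we have the exact isometry $\|A\|_{L_2(\Pi)}^2=(m_1m_2)^{-1}\|A\|_2^2$, so Assumption \ref{design_assumption} holds with equality and $\mu=\sqrt{m_1m_2}$, and the domain ${\mathbb A}={\mathbb R}^{m_1\times m_2}$ is convex. Hence, on the deterministic event $\{\lambda\ge 2\|{\bf M}\|_\infty\}$, both (\ref{first}) and (\ref{third}) of Theorem \ref{th:main} apply for every $A$; multiplying each by $\mu^2=m_1m_2$ to convert the $L_2(\Pi)$-norm into the Frobenius norm and taking the minimum of the two resulting right-hand sides yields exactly the oracle inequality (\ref{ora_ineq_completion}). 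Thus all the remaining work is to show $\mathbb P(\|{\bf M}\|_\infty>\lambda/2)\le e^{-t}$.

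Next I would write ${\bf M}=\sum_{i=1}^n Z_i$ with $Z_i=\tfrac1n(Y_iX_i-\E(Y_iX_i))$ independent and mean zero, and collect the two inputs required by matrix Bernstein. For the uniform bound, since each $X_i=e_{j_i}(m_1)e_{k_i}^\top(m_2)$ is a single-entry matrix with $\|X_i\|_\infty=1$ and $|Y_i|\le\eta$, we get $\|Y_iX_i\|_\infty\le\eta$, hence $\|Z_i\|_\infty\le 2\eta/n$ after centering. For the variance proxy I would use $X_iX_i^\top=e_{j_i}e_{j_i}^\top$ and $X_i^\top X_i=e_{k_i}e_{k_i}^\top$, together with the fact that under USR the row index $j_i$ and the column index $k_i$ are uniform, so $\E(e_{j_i}e_{j_i}^\top)=m_1^{-1}I_{m_1}$ and $\E(e_{k_i}e_{k_i}^\top)=m_2^{-1}I_{m_2}$. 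Bounding $Y_i^2\le\eta^2$ and discarding the positive semidefinite centering corrections then gives $\big\|\sum_i\E(Z_iZ_i^\top)\big\|_\infty\le \eta^2/(nm_1)$ and $\big\|\sum_i\E(Z_i^\top Z_i)\big\|_\infty\le \eta^2/(nm_2)$, so the relevant variance parameter is $\sigma_Z^2=\eta^2/\big(n(m_1\wedge m_2)\big)$.

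Finally I would apply the noncommutative Bernstein inequality with dimension factor $m=m_1+m_2$, obtaining $\mathbb P(\|{\bf M}\|_\infty>s)\le m\exp\!\big(-\tfrac{s^2/2}{\sigma_Z^2+2\eta s/(3n)}\big)$, and check that the choice $s=\lambda/2$ from (\ref{eq:th:completion_bounded}) forces the exponent to be at least $t+\log m$. Writing $\tau=t+\log m$, the two regimes in (\ref{eq:th:completion_bounded}) are tailored precisely so that the first term makes $\sigma_Z^2\le s^2/(4\tau)$ and the second makes $2\eta s/(3n)\le s^2/(4\tau)$; adding these gives $\sigma_Z^2+2\eta s/(3n)\le s^2/(2\tau)$ and hence an exponent $\ge\tau$, so that the tail is bounded by $e^{-t}$, as required.

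I expect the main obstacle to be the computation of the variance parameter $\sigma_Z^2$: one must handle the two one-sided second-moment matrices $\sum_i\E(Z_iZ_i^\top)$ and $\sum_i\E(Z_i^\top Z_i)$ correctly, exploit the single-entry structure of $X_i$ and the uniform marginals to reduce them to multiples of the identity, and keep track that it is $(m_1\wedge m_2)$ (not $m_1m_2$) that controls the leading rate. The subsequent inversion of the Bernstein bound is routine bookkeeping, and the matching of absolute constants carries ample slack.
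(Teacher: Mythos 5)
Your proposal is correct and follows essentially the same route as the paper: reduce to the event $\lambda\ge 2\|{\bf M}\|_\infty$ so that Theorem \ref{th:main} with $\mu=\sqrt{m_1m_2}$ yields (\ref{ora_ineq_completion}), and then control $\|{\bf M}\|_\infty$ via the noncommutative Bernstein inequality with exactly the variance proxy $\sigma_Z^2\le\eta^2/\bigl(n(m_1\wedge m_2)\bigr)$ and uniform bound $2\eta/n$ that the paper computes in Lemma \ref{lem:completion_bounded}. The only cosmetic difference is that you invert the exponential tail bound by hand, whereas the paper cites the pre-inverted form in Proposition \ref{prop:Bernstein_bounded}.
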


Theorems \ref{th:completion_gaussian} and
\ref{th:completion_bounded} follow immediately from Theorem
\ref{th:main} and Lemmas \ref{lem:completion_bounded},
\ref{lem:tau1} and \ref{lem:tau2} with $\mu=\sqrt{m_1m_2}$.

Note that the natural choice of $t$ in Theorems
\ref{th:completion_gaussian} and \ref{th:completion_bounded} is of
the order $\log(m)$, since a larger $t$ leads to slower rate of
convergence and a smaller $t$ does not improve the rate but makes
the concentration probability smaller. Note also that, under this
choice of $t$, the second terms under the maxima in
(\ref{eq:th:completion_gaussian}) and
(\ref{eq:th:completion_bounded}) are negligible for the values of
$n,m_1,m_2$ such that the term containing ${\rm rank}(A_0)$ in
(\ref{ora_ineq_completion}) is meaningful. Indeed, if $t$ is of the
order $\log(m)$, the condition that $m_1m_2\lambda^2\ll 1$
necessarily implies $n\gg (m_1 \vee m_2) \log(m)$. On the other
hand, the negligibility of the second terms under the maxima in
(\ref{eq:th:completion_gaussian}) and
(\ref{eq:th:completion_bounded}) is approximately equivalent to $n>
(m_1 \wedge m_2)\log^{1+2/\alpha}(m)$ and $n>(m_1 \wedge m_2)
\log(m)$ respectively. Based on these remarks, we can choose
$\lambda$ in the form
\begin{equation}\label{lam}
\lambda = C_* c_* \sqrt{\frac{\log(m)}{(m_1\wedge m_2)n}}\ ,
\end{equation}
where $c_*$ equals either $\sigma \vee a$ or $\eta$ and the constant
$C_*>0$ is large enough, and we can state the following corollary
that will be further useful for minimax considerations. Define
$\tau>0$ by
$$
\tau^2 = \left(\frac{1+\sqrt{2}}{2}\right)^2C_*^2c_*^2
\frac{M\,\log(m)}{n} \,,
$$
where $M=\max(m_1,m_2)$, and $m=m_1+m_2$.
\begin{corollary}\label{cor:completion} Let one of the
sets of conditions (i) or (ii) below be satisfied:

 (i) The assumptions
of Theorem \ref{th:completion_gaussian} with $\lambda$ as in
(\ref{lam}), $n>(m_1 \wedge m_2) \log^{1+2/\alpha}(m)$, $c_*=\sigma
\vee a$, and a large enough constant $C_*>0$ that can depend only on
$\alpha,c_1,\tilde c$.

(ii) The assumptions of Theorem \ref{th:completion_bounded} with
$n>4(m_1 \wedge m_2) \log(m)$, $\lambda$ as in (\ref{lam}),
$c_*=\eta$, and $C_*=4$.

Then, with probability at least $1-3/(m_1+m_2)$,
\begin{equation}
\label{first2_1} \frac1{m_1m_2}\|{\hat A}^\lambda-A_0\|_2^2 \leq
\min_{A\in \R^{m_1\times m_2}} \Big(\frac1{m_1m_2}\|A-A_0\|_2^2 +
\tau^2{\rm rank}(A)\Big) \,,
\end{equation}
and, in particular,
\begin{equation} \label{first2} \frac1{m_1m_2}\|{\hat
A}^\lambda-A_0\|_2^2 \leq
\left(\frac{1+\sqrt{2}}{2}\right)^2C_*^2c_*^2\log(m) \frac{M\,{\rm
rank}(A_0)}{n}  \,,
\end{equation}
where $M=\max(m_1,m_2)$, and $m=m_1+m_2$. Furthermore, with the same
probability,
\begin{equation}
\label{first2_2} \frac1{m_1m_2}\|{\hat A}^\lambda-A_0\|_2^2 \leq
\sum_{j=1}^{{\rm rank}(A_0)} \min\bigg\{\tau^2, \
\frac{\sigma_j^2(A_0)}{m_1m_2}\bigg\}\le\inf_{0<q\le 2}
\,\frac{\tau^{2-q}\|A_0\|_q^q}{(m_1m_2)^{q/2}}\,.
\end{equation}
\end{corollary}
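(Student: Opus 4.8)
The plan is to deduce all three displays from the oracle inequality (\ref{ora_ineq_completion}), which under either hypothesis (i) or (ii) holds simultaneously for every $A\in\R^{m_1\times m_2}$ on a single event. First I would take $t=\log(m)$: then the probability $1-3e^{-t}$ of Theorem \ref{th:completion_gaussian} (resp.\ $1-e^{-t}$ of Theorem \ref{th:completion_bounded}) is at least $1-3/(m_1+m_2)$, and under the sample-size conditions of (i) (resp.\ (ii)) the value of $\lambda$ in (\ref{lam}) meets the lower bound required by the corresponding theorem, the first (square-root) term under the maximum being the dominant one. The algebraic identity that drives the whole corollary is
\begin{equation*}
\Big(\tfrac{1+\sqrt2}{2}\Big)^2 m_1m_2\,\lambda^2=\tau^2,
\end{equation*}
obtained from $\lambda=C_*c_*\sqrt{\log(m)/((m_1\wedge m_2)n)}$ and $m_1m_2/(m_1\wedge m_2)=m_1\vee m_2=M$.

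Granting this identity, I would divide (\ref{ora_ineq_completion}) by $m_1m_2$ and keep only the second term inside the minimum; the right-hand side becomes $(m_1m_2)^{-1}\|A-A_0\|_2^2+\tau^2\,\mathrm{rank}(A)$, and minimizing over $A\in\R^{m_1\times m_2}$ gives (\ref{first2_1}). Inequality (\ref{first2}) is then the instance $A=A_0$, where $\|A-A_0\|_2=0$ and $\mathrm{rank}(A)=\mathrm{rank}(A_0)$, combined with the definition of $\tau^2$.

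The only genuinely non-mechanical step is (\ref{first2_2}): the oracle $A$ must be chosen as a truncation of the singular value decomposition of $A_0$, so that the bias-variance trade-off $(m_1m_2)^{-1}\|A-A_0\|_2^2+\tau^2\,\mathrm{rank}(A)$ is optimized singular value by singular value. Writing $A_0=\sum_{j=1}^{\mathrm{rank}(A_0)}\sigma_j(A_0)u_jv_j^\top$ and letting $k^\ast=\#\{j:\sigma_j^2(A_0)/(m_1m_2)>\tau^2\}$, I would apply (\ref{first2_1}) with $A=\sum_{j\le k^\ast}\sigma_j(A_0)u_jv_j^\top$, for which $\mathrm{rank}(A)=k^\ast$ and $\|A-A_0\|_2^2=\sum_{j>k^\ast}\sigma_j^2(A_0)$. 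Since $\sigma_j^2(A_0)/(m_1m_2)>\tau^2$ holds exactly for $j\le k^\ast$, the resulting bound $k^\ast\tau^2+(m_1m_2)^{-1}\sum_{j>k^\ast}\sigma_j^2(A_0)$ equals $\sum_j\min\{\tau^2,\sigma_j^2(A_0)/(m_1m_2)\}$, which is the first inequality. The last inequality is deterministic: for every $x\ge0$ and $0<q\le2$ one has $\min\{\tau^2,x\}\le\tau^{2-q}x^{q/2}$ (verify the cases $x\le\tau^2$ and $x>\tau^2$ separately), so taking $x=\sigma_j^2(A_0)/(m_1m_2)$ and summing yields $\tau^{2-q}\|A_0\|_q^q/(m_1m_2)^{q/2}$, and an infimum over $q\in(0,2]$ concludes. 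Everything other than this termwise optimization is bookkeeping.
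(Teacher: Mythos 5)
Your proposal is correct and follows essentially the same route as the paper's own (very terse) proof: take $t\asymp\log m$, use the identity $\left(\frac{1+\sqrt{2}}{2}\right)^2 m_1m_2\lambda^2=\tau^2$, read (\ref{first2_1}) and (\ref{first2}) directly off Theorems \ref{th:completion_gaussian} and \ref{th:completion_bounded} (with $A=A_0$ for the latter), and obtain (\ref{first2_2}) by optimizing the bias--variance trade-off over truncated SVDs of $A_0$ followed by an elementary inequality in $q$. The paper dismisses the first two displays as ``straightforward'' and phrases the last step as the identity $\min_A\bigl(\|A-A_0\|_2^2+\kappa^2\,{\rm rank}(A)\bigr)=\sum_j\min\{\kappa^2,\sigma_j^2(A_0)\}$ combined with $\min\{1,x^2\}\le\min\{1,x^q\}$; your explicit truncation at the level $k^\ast$ and the bound $\min\{\tau^2,x\}\le\tau^{2-q}x^{q/2}$ are the same computation spelled out, needing only the ``$\le$'' direction of that identity.
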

\begin{proof}
Inequalities (\ref{first2_1}) and (\ref{first2}) are straightforward
in view of Theorems \ref{th:completion_gaussian} and
\ref{th:completion_bounded}. To prove (\ref{first2_2}) it suffices
to note that, for any $\kappa>0$, $0<q\le 2$,
\begin{eqnarray*}
\min_{A} \Big(\|A-A_0\|_2^2 + \kappa^2\, {\rm rank}(A)\Big)&=&
\sum_{j}^{} \min\{\kappa^2, \sigma_j^2(A_0)\}= \kappa^2 \sum_{j}^{}
\min\bigg\{1, \bigg(\frac{\sigma_j(A_0)}{\kappa}\bigg)^2\bigg\}\\
 &\le & \kappa^2 \sum_{j}^{}
\min\bigg\{1, \bigg(\frac{\sigma_j(A_0)}{\kappa}\bigg)^q\bigg\} \le
\kappa^{2-q} \|A_0\|_q^q.
\end{eqnarray*}

\end{proof}

Inequality (\ref{first2}) guarantees that the normalized Frobenius
error $(m_1m_2)^{-1}\|{\hat A}^\lambda-A_0\|_2^2$ of the estimator
${\hat A}^\lambda$ is small whenever $n> C (m_1\vee m_2)\log(m){\rm
rank}(A_0)$ with a large enough $C>0$. This quantifies the sample
size $n$ necessary for successful matrix completion from noisy data.

Note that we can choose $\lambda$ not necessarily equal but also
greater or equal to the right hand side of (\ref{lam}), or
equivalently, $\lambda= tC_* c_* \sqrt{\frac{\log(m)}{(m_1\wedge
m_2)n}}$ for any $t\ge 1$. Then the resulting oracle inequalities
will remain of the same form with $\tau^2$ multiplied by the
constant $t^2$.

Keshavan et al. \cite{Keshavan}, Theorem 1.1,  under a sampling
scheme different from ours (sampling without replacement) and
sub-gaussian errors, proposed an estimator ${\hat A}$ satisfying,
with probability at least $1-(m_1\wedge m_2)^{-3}$,
\begin{equation}\label{SOI-rank}
\frac1{m_1 m_2}\|\hat{A} - A_0\|_{2}^2 \leqslant C \sqrt{\beta}
  \,\log (n) \,\frac{ M \,{\rm rank}(A_0)}{ n}\,,
\end{equation}
where $C>0$ is a constant, and $\beta=(m_1\vee m_2)/(m_1\wedge m_2)$
is the aspect ratio. A drawback is that the construction of ${\hat
A}$ in \cite{Keshavan} requires the exact knowledge of ${\rm
rank}(A_0)$ (although it does not seem to require the knowledge of
$a$). Furthermore, the bound (\ref{SOI-rank}) is suboptimal for
"very rectangular" matrices, i.e., when~$\beta\gg1$. Candes and Plan
\cite{cp09} provide a coarser bound than (\ref{SOI-rank}), not
guaranteeing a simple consistency when $n\to\infty$ whatever are $M$
and ${\rm rank}(A_0)$ (see \cite{nw10} for more detailed comments on
\cite{cp09}).


\section{Lower Bounds}\label{lower bounds}

In this section, we prove the minimax lower bounds showing that the
rates attained by our estimator are optimal up to logarithmic
factors. The argument here is close to \cite{Rohde} where the lower
bounds are obtained on the Schatten balls. However, we consider
different classes that consist of matrices with uniformly (in $m_1,
m_2, n$) bounded entries. We cannot apply directly the lower bounds
of Theorem 6 in \cite{Rohde} for USR matrix completion on the
Schatten balls because they are achieved on matrices with entries,
which are not uniformly bounded for $m_1m_2\gg n$.

We will need the following assumption, which is similar in spirit
but, in general, substantially weaker than the usual Restricted
Isometry condition.
\begin{assumption}\label{RI} {\bf(Restricted Isometry in Expectation.)} For some $1\leq r \leq \min(m_1,m_2)$ and some $0<\mu<\infty$
that there exists a constant $\delta_r\in [0,1)$ such that
$$
(1-\delta_r)\|A\|_2 \leq \mu\|A\|_{L_2(\Pi)} \leq
(1+\delta_r)\|A\|_2,
$$
for all matrices $A\in \R^{m_1\times m_2}$ with rank at most $r$.
\end{assumption}

For the particular case of fixed $X_i$ (cf. Example 4 in the
Introduction), Assumption~\ref{RI} coincides with the matrix version
of scaled restricted isometry with scaling factor $\mu$
\cite{Rohde}.

{\sc Remark~1.} Inspection of the proof of Theorem~\ref{th:lower}
shows that it remains valid if we replace $1-\delta_r$ and
$1+\delta_r$ by arbitrary positive constants $\nu_1$ and $\nu_2$
such that $\nu_1\le\nu_2$. We use the formulation involving
$\delta_r$ only to ease parallels to the usual restricted isometry
condition.

We will denote by $\inf_{\hat{A}}$ the infimum over all estimators
$\hat{A}$ with values in $\R^{m_1\times m_2}$. For any integer $r\le
\min(m_1,m_2)$ and any $a>0$ we consider the class of matrices
\begin{equation*}\label{Alb}
{\cal A}(r,a)= \big\{A_0\in\,\R^{m_1\times m_2}:\,
\mathrm{rank}(A_0)\leq r,\, \max_{i,j}|a_0(i,j)|\,\leq\, a\big\}\,.
\end{equation*}
For any $A\in\R^{m_1\times m_2}$, let $\P_A$ denote the probability
distribution of the observations $(X_1,Y_1,\dots,X_n,Y_n)$ with $\E
(Y_i|X_i)=\langle A,X_i\rangle$. We set for brevity
$M=\max(m_1,m_2)$.

\begin{theorem}\label{th:lower} Fix $a>0$ and an integer
$1\leq  r\leq \min(m_1,m_2)$. Let Assumption \ref{RI} be satisfied
with some $\mu>0$. Assume that $\mu^2 r\leq n \min(m_1,m_2)$, and
that conditionally on $X_i$, the variables $\xi_i$ are Gaussian
${\cal N}(0,\sigma^2)$, $\sigma^2>0$, for $i=1,\dots,n$.  Then there
exist absolute constants $\beta\in(0,1)$ and $c>0$, such that
\begin{equation}\label{eq:lower1}
\inf_{\hat{A}}
\sup_{\substack{A_0\in\,{\cal A}(r,a)
}}
\P_{A_0}\bigg(\|\hat{A}-A_0\|^2_{L_2(\Pi)}> c(1-\delta_r)^2(\sigma
\wedge a)^2 \frac{Mr}{n} \bigg)\ \geq\ \beta.
\end{equation}
\end{theorem}
\begin{proof}
Without loss of generality, assume that $M=\max(m_1,m_2)=m_1\geq
m_2$. For some constant $0\leq \gamma \leq 1$ we define
$$
\mathcal{C} \, =\Big\{ \tilde{A}=(a_{ij})\in\R^{m_1\times r}:
a_{ij}\in\Big\{0, \gamma(\sigma \wedge a) \Big(\frac{\mu^2
r}{m_2n}\Big)^{1/2}\Big\}\,, \forall 1\leq i \leq m_1,\, 1\leq j\leq
r\Big\},
$$
and consider the associated set of block matrices
$$
\mathcal{B}(\mathcal{C})\ =\ \Big\{
A=(\begin{array}{c|c|c|c}\tilde{A}&\cdots&\tilde{A}&O
\end{array})\in\R^{m_1\times m_2}: \tilde{A}\in \mathcal{C}\Big\},
$$
where $O$ denotes the $m_1\times (m_2-r\lfloor m_2/r \rfloor )$ zero
matrix, and $\lfloor x \rfloor$ is the integer part of $x$.

By construction, any element of $\mathcal{B}(\mathcal{C})$ as well
as the difference of any two elements of $\mathcal{B}(\mathcal{C})$
has rank at most $r$ and the entries of any matrix in
$\mathcal{B}(\mathcal{C})$ take values in $[0,a]$. Thus,
$\mathcal{B}(\mathcal{C})\subset {\cal A}(r,a)$. Due to the
Varshamov-Gilbert bound (cf. Lemma 2.9 in \cite{tsy_09}), there
exists a subset $\AA^0\subset\mathcal{B}(\mathcal{C})$ with
cardinality $\mathrm{Card}(\AA^0) \geq 2^{rm_1/8}+1$ containing the
zero $m_1\times m_2$ matrix ${\bf 0}$ and such that, for any two
distinct elements $A_1$ and $A_2$ of $\AA^0$,
\begin{equation}\label{lower_2}
\Arrowvert A_1-A_2\Arrowvert_{2}^2 \geq \frac{m_1r}{8}
\left(\gamma^2(\sigma \wedge a)^2 \frac{\mu^2r}{m_2n} \right)
\left\lfloor \frac{m_2}{r}\right\rfloor \geq
\frac{\gamma^2}{16}(\sigma \wedge a)^2\frac{\mu^2m_1r}{n}\,.
\end{equation}
In view of Assumption \ref{RI}, this implies
\begin{align}\label{eq: condition A}
\Arrowvert A_1-A_2\Arrowvert_{L_2(\Pi)}^2\geq &
(1-\delta_r)^2\frac{\gamma^2}{16}(\sigma \wedge a)^2\frac{m_1r}{n}.
\end{align}
Using that, conditionally on $X_i$, the distributions of $\xi_i$ are
Gaussian, we get that, for any $A\in \AA_0$, the Kullback-Leibler
divergence $K\big(\P_{{\bf 0}},\P_{A}\big)$ between $\P_{{\bf 0}}$
and $\P_{A}$ satisfies
\begin{equation}\label{KLdiv}
K\big(\P_{{\bf 0}},\P_{A}\big)\ =\
\frac{n}{2\sigma^2}\|A\|_{L_2(\Pi)}^2 \leq
(1+\delta_r)^2\frac{\gamma^2}{2}m_1r.
\end{equation}
From (\ref{KLdiv}) we deduce that the condition
\begin{equation}\label{eq: condition C}
\frac{1}{\mathrm{Card}(\AA^0)-1} \sum_{A\in\AA^0}K(\P_{\bf
0},\P_{A})\ \leq\ \alpha \log \big(\mathrm{Card}(\AA^0)-1\big)
\end{equation}
is satisfied for any $\alpha>0$ if $ \gamma>0$ is chosen as a
sufficiently small numerical constant depending on $\alpha$. In view
of (\ref{eq: condition A}) and (\ref{eq: condition C}), the result
now follows by application of Theorem 2.5 in \cite{tsy_09}.
\end{proof}

In the USR matrix completion problem we have $\|A\|_{L_2(\Pi)}^2 =
(m_1m_2)^{-1}\|A\|_2^2$ for all matrices $A\in\R^{m_1\times m_2}$.
Thus, the corresponding lower bound follows immediately from the
previous theorem with $\delta_r=0$ and $\mu= \sqrt{m_1m_2}$.

\begin{theorem}\label{th:lower_completion} Fix $a>0$ and an integer
$r$ such that $1\leq  r\leq \min(m_1,m_2)$, $M r\leq n$. Let the
matrices $X_i$ be i.i.d. uniformly distributed on $\mathcal X$ and
let, conditionally on $X_i$, the variables $\xi_i$ be Gaussian
${\cal N}(0,\sigma^2)$, $\sigma^2>0$, for $i=1,\dots,n$. Then there
exist absolute constants $\beta\in(0,1)$ and $c>0$, such that
\begin{equation}\label{eq:lower2}
\inf_{\hat{A}}\ \sup_{\substack{A_0\in\,{\cal A}(r,a) }}
\P_{A_0}\bigg(\frac1{m_1m_2}\|\hat{A}-A_0\|^2_{2}> c(\sigma\wedge
a)^2 \frac{Mr}{n} \bigg)\ \geq\ \beta.
\end{equation}
\end{theorem}

Comparing Theorem~\ref{th:lower_completion} with
Corollary~\ref{cor:completion}(i) we see that, in the case of
Gaussian errors $\xi_i$, the rate of convergence of our estimator
$\hat A^\lambda$ given in (\ref{first2}) is optimal (up to a
logarithmic factor) in a minimax sense on the class of matrices
${\cal A}(r,a)$.

Similar conclusion can be obtained for the statistical learning
setting. Indeed, assume that the pairs $(X_i,Y_i)$ are i.i.d.
realizations of a random pair $(X,Y)$ with distribution $P_{XY}$
belonging to the class
$$
{\cal P}_{A_0,\eta} = \left\{ P_{XY}: \ X\sim \Pi_0, |Y|\le \eta
\,{\rm (a.s.)}, \ \E(Y|X)=\langle A_0,X\rangle \, \right\},
$$
where $\Pi_0$ is the uniform distribution on $\mathcal X$, $1\le r
\le \min(m_1,m_2)$ is an integer, and $\eta>0$.
\begin{theorem}\label{th:lower_bounded} Let $n,m_1,m_2,r$ be as in
Theorem~\ref{th:lower}. Let $(X_i,Y_i)$ be i.i.d. realizations of a
random pair $(X,Y)$ with distribution $P_{XY}$. Then there exist
absolute constants $\beta\in(0,1)$ and $c>0$, such that
\begin{equation}\label{eq:th:lower_bounded2}
\inf_{\hat{A}}\ \sup_{{\rm rank}(A_0)\le r}\ \sup_{P_{XY}\in\,{\cal
P}_{A_0,\eta}} \P \bigg(\frac1{m_1m_2}\|\hat{A}-A_0\|^2_{2}> c\eta^2
\frac{Mr}{n} \bigg)\ \geq\ \beta.
\end{equation}
\end{theorem}
\begin{proof} We act as in the proof of
Theorem~\ref{th:lower} with some modifications. Assuming that
$M=\max(m_1,m_2)=m_1\geq m_2$ and $0\leq \gamma \leq 1/2$ we define
the class of matrices
$$
\mathcal{C}'\, =\Big\{ \tilde{A}=(a_{ij})\in\R^{m_1\times r}:
a_{ij}\in\Big\{0, \gamma\eta \Big(\frac{\mu^2
r}{m_2n}\Big)^{1/2}\Big\}\,, \forall \, 1\leq i \leq m_1,\, 1\leq
j\leq r\Big\},
$$
and take its block extension $\mathcal{B}(\mathcal{C}')$. Consider
the joint distributions $P_{XY}$ such that $X\sim \Pi_0$ and,
conditionally on $X$, $Y=\eta$ with probability
$p_{A_0}(X)=1/2+\langle A_0,X\rangle/(2\eta)$ and $Y=-\eta$ with
probability $1-p_{A_0}(X)=1/2-\langle A_0,X\rangle/(2\eta)$, where
$A_0\in\mathcal{B}(\mathcal{C}')$. It is easy to see that such
distributions $P_{XY}$ belong to the class ${\cal P}_{A_0,\eta}$,
and our assumptions guarantee that $1/4 \le p_{A_0}(X)\le 3/4$,
${\rm rank}(A_0)\le r$ for all $A_0\in\mathcal{B}(\mathcal{C}')$. We
will denote the corresponding $n$-product measure by $\P_{A_0}$. For
any $A\in\mathcal{B}(\mathcal{C}')$, the Kullback-Leibler divergence
between $\P_{\bf 0}$ and $\P_{A}$ has the form
\begin{equation}\label{KLdiv1}
K\big(\P_{\bf 0},\P_{A}\big)\ =\ n \E\left(p_{\bf
0}(X)\log\frac{p_{\bf 0}(X)}{p_{A}(X)}+ (1-p_{\bf
0}(X))\log\frac{1-p_{\bf 0}(X)}{1-p_{A}(X)} \right)\,.
\end{equation}
Using the inequality $-\log (1+u)\le -u+u^2/2$, $\forall \ u> -1$,
and the fact that $1/4 \le p_{A}(X)\le 3/4$, we find that the
expression under the expectation in (\ref{KLdiv1}) is bounded by
$2(p_{\bf 0}(X)-p_{A}(X))^2$. This implies
$$
K\big(\P_{\bf 0},\P_{A}\big)\ \le
\frac{n}{2\eta^2}\|A\|_{L_2(\Pi_0)}^2.
$$
The remaining arguments are analogous to those in the proof of
Theorem~\ref{th:lower}.
\end{proof}

\section{Further results and examples} \label{sec:discussion}

\subsection{Recovery of the rank and specific lower bound}\label{sec:rank_recovery}

A notable property of the estimator ${\hat A}^\lambda$ in matrix
completion setting is that it has the same rank as the underlying
matrix $A_0$ with probability close to 1. As a consequence we can
establish a lower bound for the Frobenius error of $\hat A^\lambda$
with the rates matching up
to constants the upper bounds of Corollary 2. 
\begin{theorem}\label{th:rank_recovery} Let $X_i$ be i.i.d.
uniformly distributed on $\mathcal X$ and let $\lambda$ satisfy the
inequality $\lambda \ge 2\|{\bf M}\|_\infty$ (as in Theorem
\ref{th:main}). Consider the estimator ${\hat A}^{\lambda'}$ with
$\lambda'=\lambda/(1-\delta)$ for some $0<\delta<1$. Set ${\hat r} =
{\rm rank}({\hat A}^{\lambda'})$. Then
\begin{equation}\label{eq:th:rank_recovery1}
{\hat r} \le {\rm rank}(A_0).
\end{equation}
If, in addition, $\displaystyle{\min_{j:\,\sigma_j(A_0)\ne0}}
\sigma_j(A_0)\ge \lambda' m_1m_2$, then
\begin{equation}\label{eq:th:rank_recovery2}
{\hat r} \ge {\rm rank}(A_0),
\end{equation}
and
\begin{equation}\label{eq:th:rank_recovery3}
\|\hat A ^{\lambda'} - A_0\|_2^2 \geq
\frac{\delta^2}{4(1-\delta)^2}{\rm rank}(A_0)\left(\lambda m_1
m_2\right)^2.
\end{equation}
\end{theorem}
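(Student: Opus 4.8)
The plan is to exploit the explicit soft-thresholding form of the estimator given in (\ref{alambda}). Recall that for the matrix completion setting,
$$
{\hat A}^{\lambda'} = \sum_{j} \big(\sigma_j({\mathbf X})-\lambda' m_1m_2/2\big)_+ u_j({\mathbf X}) v_j({\mathbf X})^\top,
$$
so that the rank ${\hat r}$ of ${\hat A}^{\lambda'}$ equals the number of singular values of ${\mathbf X}$ that strictly exceed the threshold $\lambda' m_1m_2/2$. The strategy is therefore to control the singular values of ${\mathbf X}$ relative to those of $A_0$, using that ${\mathbf X}$ is a perturbation of the (normalized) target matrix. First I would write ${\mathbf X} = A_0 + {\mathbf N}$, where ${\mathbf N}$ collects the centering and the noise; in fact, by the definition of ${\mathbf M}$ and $\Pi$ one checks that $\E {\mathbf X} = A_0$ and ${\mathbf X}-A_0 = m_1m_2\, {\mathbf M}$, so $\|{\mathbf X}-A_0\|_\infty = m_1 m_2 \|{\mathbf M}\|_\infty$.

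For the upper bound (\ref{eq:th:rank_recovery1}), the key step is Weyl's perturbation inequality for singular values: $|\sigma_j({\mathbf X}) - \sigma_j(A_0)| \le \|{\mathbf X}-A_0\|_\infty = m_1 m_2\|{\mathbf M}\|_\infty$. Since $\lambda \ge 2\|{\mathbf M}\|_\infty$ and $\lambda' = \lambda/(1-\delta) > \lambda$, we have $\lambda' m_1 m_2/2 > m_1 m_2\|{\mathbf M}\|_\infty$. Hence for any index $j$ with $\sigma_j(A_0)=0$ (i.e. $j > {\rm rank}(A_0)$), Weyl gives $\sigma_j({\mathbf X}) \le m_1 m_2 \|{\mathbf M}\|_\infty < \lambda' m_1 m_2/2$, so such singular values are thresholded to zero. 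This forces ${\hat r} \le {\rm rank}(A_0)$.

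For the lower bound (\ref{eq:th:rank_recovery2}), I would use the reverse direction of the same Weyl inequality together with the additional separation hypothesis $\min_{j:\,\sigma_j(A_0)\ne 0}\sigma_j(A_0)\ge \lambda' m_1 m_2$. For each $j\le {\rm rank}(A_0)$ we get $\sigma_j({\mathbf X}) \ge \sigma_j(A_0) - m_1 m_2\|{\mathbf M}\|_\infty \ge \lambda' m_1 m_2 - \lambda m_1 m_2/2$. Since $\lambda = (1-\delta)\lambda'$, this lower bound equals $\lambda' m_1 m_2(1-(1-\delta)/2) = \lambda' m_1 m_2 (1+\delta)/2 > \lambda' m_1 m_2/2$, so each such $\sigma_j({\mathbf X})$ survives the thresholding and contributes to the rank. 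Thus ${\hat r}\ge {\rm rank}(A_0)$, and combined with the previous step equality holds.

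Finally, for (\ref{eq:th:rank_recovery3}) I would bound the Frobenius error from below by restricting attention to the surviving singular directions. The soft-thresholded $j$-th singular value of ${\hat A}^{\lambda'}$ is $\sigma_j({\mathbf X})-\lambda' m_1 m_2/2$, while the corresponding contribution from $A_0$ is $\sigma_j(A_0)$; the gist is that the shrinkage introduces a deterministic bias of size at least $\lambda' m_1 m_2/2 - m_1 m_2\|{\mathbf M}\|_\infty \ge \lambda' m_1 m_2/2 - \lambda m_1 m_2/2 = \delta\lambda' m_1 m_2/2 = \delta\lambda m_1 m_2/(2(1-\delta))$ in each of the ${\rm rank}(A_0)$ surviving directions. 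Summing the squares of these per-direction biases over the ${\rm rank}(A_0)$ directions yields the claimed bound $\frac{\delta^2}{4(1-\delta)^2}{\rm rank}(A_0)(\lambda m_1 m_2)^2$; making the direction-by-direction lower bound rigorous (accounting for the fact that the singular vectors of ${\mathbf X}$ and $A_0$ differ) is the step requiring the most care, and I expect it to be the main obstacle. I would handle it by working on the subspace spanned by the top singular directions and using that the shrinkage reduces each surviving singular value by exactly $\lambda' m_1 m_2/2$, so that $\|{\hat A}^{\lambda'}-A_0\|_2 \ge \|\,\sigma_j({\hat A}^{\lambda'}) - \sigma_j(A_0)\,\|$ componentwise after applying a suitable variational (min–max) characterization or a second application of Weyl's inequality in reverse.
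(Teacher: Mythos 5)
Your arguments for (\ref{eq:th:rank_recovery1}) and (\ref{eq:th:rank_recovery2}) are correct and are essentially the paper's own: write ${\bf X}-A_0=m_1m_2{\bf M}$, apply Weyl's perturbation inequality for singular values, and compare with the threshold $\lambda' m_1m_2/2$ using $\lambda=(1-\delta)\lambda'$.

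The genuine gap is exactly where you flagged it, in (\ref{eq:th:rank_recovery3}). The inequality you need,
$\|\hat A^{\lambda'}-A_0\|_2^2\geq \sum_{j}\bigl(\sigma_j(\hat A^{\lambda'})-\sigma_j(A_0)\bigr)^2$,
is true, but it is Mirsky's theorem (the Frobenius-norm, Hoffman--Wielandt-type analogue of Weyl's inequality for singular values), and it does not follow from the tools you propose. A ``second application of Weyl's inequality in reverse'' only gives $|\sigma_j(\hat A^{\lambda'})-\sigma_j(A_0)|\leq \|\hat A^{\lambda'}-A_0\|_\infty$ for each $j$; summing squares of these bounds yields $\min(m_1,m_2)\,\|\hat A^{\lambda'}-A_0\|_\infty^2$ on the right-hand side, which is off by a factor $\min(m_1,m_2)$ because the operator norm cannot see the $r$ separate directions. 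A min--max (Courant--Fischer) characterization has the same limitation. So, as written, your plan does not close. Two ways to fix it: (a) invoke Mirsky's theorem explicitly, after which your per-direction bias bound $\sigma_j(A_0)-\sigma_j(\hat A^{\lambda'})\geq \lambda' m_1m_2/2-m_1m_2\|{\bf M}\|_\infty\geq \delta\lambda' m_1m_2/2$ for $j\leq r$ gives the claim in one line; or (b) use the paper's argument, which avoids singular-value perturbation in Frobenius norm altogether: let ${\cal P}$ be the Frobenius-orthogonal projection onto the span of the $r$ orthonormal rank-one matrices $u_j({\bf X})v_j({\bf X})^\top$, $j\leq r={\rm rank}(A_0)$. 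Then
\begin{equation*}
\|\hat A^{\lambda'}-A_0\|_2\ \geq\ \|{\cal P}(\hat A^{\lambda'}-{\bf X})\|_2-\|{\cal P}({\bf X}-A_0)\|_2 .
\end{equation*}
The first term equals exactly $\sqrt{r}\,\lambda' m_1m_2/2$, because by (\ref{alambda}) and $\hat r=r$ the estimator shrinks each surviving singular value of ${\bf X}$ by exactly $\lambda' m_1m_2/2$ in those same directions; the second term is at most $\sqrt{r}\,\|{\bf X}-A_0\|_\infty\leq \sqrt{r}\,\lambda m_1m_2/2$, using the elementary bound $\|{\cal P}(B)\|_2\leq \sqrt{r}\,\|B\|_\infty$ for a rank-$r$ projection. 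Subtracting gives $\delta\sqrt{r}\,\lambda' m_1m_2/2$, i.e., (\ref{eq:th:rank_recovery3}), with only the triangle inequality as input. The key structural point your proposal misses is that one should project onto the singular directions of ${\bf X}$ (shared by $\hat A^{\lambda'}$), not attempt to align the singular directions of $\hat A^{\lambda'}$ with those of $A_0$.
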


\begin{proof} Note that ${\bf X} -A_0 = m_1m_2 {\bf M}$. Using
standard matrix perturbation argument (cf. \cite{SS90}, page 203),
we get, for all $j=1,\dots, m_1\wedge m_2$,
$$
|\sigma_j({\bf X})-\sigma_j(A_0)|\le \sigma_1({\bf X} -A_0) =m_1m_2
\|{\bf M}\|_\infty \le\frac{ \lambda m_1m_2}{2}= (1-\delta)\frac{
\lambda' m_1m_2}{2}\,.
$$
Since, by (\ref{alambda}), $\sigma_{\hat r}({\bf X})>\lambda'
m_1m_2/2$, we find that $\sigma_{\hat r}(A_0)>\delta\lambda'
m_1m_2/2$. This implies (\ref{eq:th:rank_recovery1}). Now, if
$\sigma_j(A_0)\ge \lambda' m_1m_2$ we get
$$
\sigma_j({\bf X})\ge \sigma_j(A_0)-|\sigma_j({\bf X})-\sigma_j(A_0)|
\ge \lambda' m_1m_2 - (1-\delta)\frac{ \lambda' m_1m_2}{2}>\frac{
\lambda' m_1m_2}{2}\,,
$$
and thus (\ref{eq:th:rank_recovery2}) follows.

To prove (\ref{eq:th:rank_recovery3}), denote by ${\cal
P}:\R^{m_1\times m_2}\to \R^{m_1\times m_2}$ the projector on the
linear span of matrices $\left(u_j(\mathbf X)v_j(\mathbf X)^\top,
j=1,\dots, r\right)$, where $r={\rm rank}(A_0)$. We have $\|\hat
A^{\lambda'} - A_0\|_2 \geq \|{\cal P}(\hat A^{\lambda'} - A_0)\|_2
\geq \|{\cal P}(\hat A^{\lambda'} - \mathbf X)\|_2 - \|{\cal
P}({\mathbf X} -A_0)\|_2$. Here $\|{\cal P}(\hat A^{\lambda'} -
\mathbf X)\|_2 = \sqrt{r}\lambda' m_1m_2/2$ in view of
(\ref{alambda}) and the fact that ${\hat r}=r$, cf.
(\ref{eq:th:rank_recovery1}) and (\ref{eq:th:rank_recovery2}). On
the other hand, $\|{\cal P}({\mathbf X} -A_0)\|_2\le \sqrt{r}\| {\bf
M}\|_{\infty}m_1m_2\le \sqrt{r}\lambda m_1m_2/2$. This implies
\begin{equation*}
\|\hat A^{\lambda'} - A_0\|_2 \geq  \sqrt{{\hat r}}
\left(\frac{\lambda' m_1 m_2 }{2} - (1-\delta) \frac{\lambda' m_1
m_2 }{2}\right) = \delta \sqrt{{\hat r}} \frac{\lambda' m_1 m_2
}{2}.
\end{equation*}

 \end{proof}

\begin{corollary}\label{cor:rank_recovery} Let the assumptions of Corollary
\ref{cor:completion} be satisfied. Consider the estimator ${\hat
A}^{\lambda'}$ with $$\lambda'=\frac{C_* c_*}{1-\delta}
\sqrt{\frac{\log(m)}{(m_1\wedge m_2)n}}
$$ for some
$0<\delta<1$. Set ${\hat r} = {\rm rank}({\hat A}^{\lambda'})$. Then
${\hat r} \le {\rm rank}(A_0)$ with probability at least
$1-3/(m_1+m_2)$. If, in addition,
\begin{equation}\label{nizhn}\displaystyle{\min_{j:\,\sigma_j(A_0)\ne0}} \sigma_j(A_0)\ge
\frac{C_* c_*}{1-\delta} \sqrt{m_1m_2}\sqrt{\frac{\log(m)(m_1\vee
m_2)}{n}},
\end{equation} then $ {\hat r} \ge {\rm rank}(A_0) $ and
\begin{equation}\label{specificlb}
\frac{1}{m_1m_2}\|\hat A ^{\lambda'} - A_0\|_2^2 \geq
\frac{\delta^2C_*^2c_*^2}{4(1-\delta)^2}{\rm rank}(A_0)\frac{ \log
(m)( m_1\vee m_2)}{n},
\end{equation}
with the same probability.
\end{corollary}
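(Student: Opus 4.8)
The plan is to obtain Corollary \ref{cor:rank_recovery} as a direct specialization of Theorem \ref{th:rank_recovery} to the regularization parameter fixed in Corollary \ref{cor:completion}. Recall that, under either set of assumptions of Corollary \ref{cor:completion}, the choice $\lambda=C_*c_*\sqrt{\log(m)/((m_1\wedge m_2)n)}$ from (\ref{lam}) satisfies $\lambda\ge 2\|{\bf M}\|_\infty$ on an event of probability at least $1-3/(m_1+m_2)$; this is exactly the event furnished by Theorems \ref{th:completion_gaussian} and \ref{th:completion_bounded} with $t$ of order $\log(m)$. I would work throughout on this event, so that every assertion inherits the claimed probability.

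On that event the hypothesis $\lambda\ge 2\|{\bf M}\|_\infty$ of Theorem \ref{th:rank_recovery} holds, and the value $\lambda'=\lambda/(1-\delta)$ coincides with the $\lambda'$ in the statement of the corollary. The bound ${\hat r}\le{\rm rank}(A_0)$ is then immediate from (\ref{eq:th:rank_recovery1}). For the reverse inequality I would check that the extra hypothesis (\ref{nizhn}) is merely a rewriting of the condition $\min_{j:\sigma_j(A_0)\ne0}\sigma_j(A_0)\ge\lambda' m_1m_2$ required in Theorem \ref{th:rank_recovery}. Using the elementary identity $m_1m_2/(m_1\wedge m_2)=m_1\vee m_2$, one gets $\lambda' m_1m_2=\frac{C_*c_*}{1-\delta}\sqrt{m_1m_2}\sqrt{\log(m)(m_1\vee m_2)/n}$, which is precisely the right-hand side of (\ref{nizhn}); hence (\ref{eq:th:rank_recovery2}) gives ${\hat r}\ge{\rm rank}(A_0)$.

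Finally I would substitute the value of $\lambda$ into the lower bound (\ref{eq:th:rank_recovery3}). The same identity yields $(\lambda m_1m_2)^2=C_*^2c_*^2\,\log(m)\,m_1m_2(m_1\vee m_2)/n$, and dividing (\ref{eq:th:rank_recovery3}) by $m_1m_2$ produces (\ref{specificlb}) with the stated constant $\delta^2C_*^2c_*^2/(4(1-\delta)^2)$. There is no genuine obstacle here, since the corollary is a deterministic consequence of Theorem \ref{th:rank_recovery} once the favorable event has been fixed; the only point deserving care is the arithmetic reconciling the normalization $(m_1\wedge m_2)n$ in the definition (\ref{lam}) of $\lambda$ with the factor $(m_1\vee m_2)/n$ appearing in (\ref{nizhn}) and (\ref{specificlb}), which rests entirely on $m_1m_2/(m_1\wedge m_2)=m_1\vee m_2$.
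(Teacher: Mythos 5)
Your proposal is correct and is exactly the argument the paper intends: the corollary is stated without a separate proof precisely because it is the specialization of Theorem \ref{th:rank_recovery} to $\lambda$ as in (\ref{lam}), applied on the probability-$\bigl(1-3/(m_1+m_2)\bigr)$ event from Theorems \ref{th:completion_gaussian} and \ref{th:completion_bounded} where $\lambda\ge 2\|{\bf M}\|_\infty$. Your arithmetic, including the key identity $m_1m_2/(m_1\wedge m_2)=m_1\vee m_2$ that converts $\lambda' m_1m_2$ into the right-hand side of (\ref{nizhn}) and $(\lambda m_1m_2)^2/(m_1m_2)$ into the rate in (\ref{specificlb}), checks out.
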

We note that the lower bound for $\sigma_j(A_0)$ in (\ref{nizhn}) is
not excessively high, since $\sqrt{m_1m_2}$ is a "typical" order of
the
largest singular value $\sigma_1(A_0)$ for non-lacunary matrices $A_0$. 
For example, if all the entries of $A_0$ are equal to some constant
$a$, the left hand side of (\ref{nizhn}) is equal to
$\sigma_1(A_0)=a\sqrt{m_1m_2}$.

\subsection{Risk bounds in statistical learning}\label{subsec:learning}

The results of the previous sections can be also extended to the
traditional statistical learning setting where $(X_i,Y_i)$ is a
sequence of i.i.d. replications of a random pair $(X,Y)$ with $X\in
\R^{m_1\times m_2}$ and $Y\in \R$, and there is no underlying model
determined by matrix $A_0$, i.e., we do not assume that
$\E(Y|X)=\langle A_0, X\rangle$. Then the above oracle inequalities
can be reformulated in terms of the prediction risk
$$
R(A)=\E \big[(Y-\langle A, X\rangle)^2\big], \quad \forall \ A\in
\R^{m_1\times m_2}.
$$
We illustrate this by an example dealing with USR matrix completion.
Specifically, Theorem~\ref{th:completion_bounded} is reformulated in
the following way.
\begin{theorem}\label{th:completion_learning}
Let $X_i$ be i.i.d. uniformly distributed on $\mathcal X$. Assume
that $|Y|\le \eta$ almost surely for some constant $\eta$. For $t>0$
consider the regularization parameter $\lambda$ satisfying
(\ref{eq:th:completion_bounded}). Then with probability at least
$1-e^{-t}$ we have
\begin{equation}\label{ora_ineq_completion_learning}
R({\hat A}^\lambda) \leq R(A) +
\min\bigg\{2 \lambda  \|A\|_1, \ \left(\frac{1+\sqrt{2}}{2}\right)^2
m_1m_2 \lambda^2 \, {\rm rank}(A) \bigg\}
\end{equation}
for all $A\in \mathbb R^{m_1\times m_2}$. In particular, under the
assumptions of Corollary~\ref{cor:completion}(ii),
\begin{equation}\label{ora_ineq_completion_learning1}
R({\hat A}^\lambda) \leq \min_{A\in \R^{m_1\times m_2}} \bigg( R(A)
+ 4 \left(1+\sqrt{2}\right)^2 \eta^2\log(m) \frac{M\,{\rm
rank}(A)}{n}\bigg)\,.
\end{equation}
\end{theorem}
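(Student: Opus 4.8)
The plan is to reduce the statistical learning problem to the trace regression analysis already carried out in Theorem \ref{th:main}. Since there is no matrix $A_0$, the natural substitute is the risk minimizer, and the first step is to identify it. In the USR setting $X$ is uniform on $\mathcal X$, so $\E(\langle A,X\rangle X) = (m_1m_2)^{-1}A$ for every $A$, and the gradient $\nabla R(A) = -2\E(YX) + 2(m_1m_2)^{-1}A$ vanishes at $A^\ast := m_1m_2\,\E(YX)$, whose entries are the conditional means $\E(Y\mid X = e_j(m_1)e_k^\top(m_2))$ and are therefore bounded by $\eta$. From the first-order condition $\E\big((Y-\langle A^\ast,X\rangle)X\big)=0$ I would derive the Pythagorean identity $R(A) = R(A^\ast) + \|A-A^\ast\|_{L_2(\Pi)}^2$, valid for all $A$, so that $R(\hat A^\lambda)-R(A)$ equals the difference of squared $L_2(\Pi)$-distances to $A^\ast$. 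This turns the risk oracle inequality into an oracle inequality for $\|\cdot - A^\ast\|_{L_2(\Pi)}^2$.

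The key observation is then that $A^\ast$ plays exactly the role of $A_0$ in Theorem \ref{th:main}. The estimator $\hat A^\lambda$ is defined by the same variational problem, and the only property of $A_0$ used in the proof of that theorem is that the centering in ${\bf M}=n^{-1}\sum_i(Y_iX_i-\E(Y_iX_i))$ agrees with $n^{-1}\sum_i\E(\langle A_0,X_i\rangle X_i)$; by the defining normal equation for $A^\ast$ this holds with $A^\ast$ in its place. Hence the conclusion of Theorem \ref{th:main} transfers verbatim, with $\mu^2=m_1m_2$, giving on the event $\{\lambda\ge 2\|{\bf M}\|_\infty\}$ the bound $\|\hat A^\lambda-A^\ast\|_{L_2(\Pi)}^2 \le \|A-A^\ast\|_{L_2(\Pi)}^2 + \min\{2\lambda\|A\|_1,\ (\tfrac{1+\sqrt2}{2})^2 m_1m_2\lambda^2\,\mathrm{rank}(A)\}$ for all $A$. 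Combining this with the Pythagorean identity produces (\ref{ora_ineq_completion_learning}) directly.

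To make the event $\{\lambda\ge 2\|{\bf M}\|_\infty\}$ have probability at least $1-e^{-t}$ I would reuse the stochastic control from the bounded case: the bound on $\|{\bf M}\|_\infty$ in Lemma \ref{lem:completion_bounded} uses only $|Y_i|\le\eta$ and the i.i.d.\ structure of the $(X_i,Y_i)$, neither of which relies on the existence of $A_0$, so the admissible $\lambda$ is precisely (\ref{eq:th:completion_bounded}). For the second assertion I would substitute $\lambda=4\eta\sqrt{\log(m)/((m_1\wedge m_2)n)}$ from (\ref{lam}) into the rank term, use $m_1m_2/(m_1\wedge m_2)=M$ and $(\tfrac{1+\sqrt2}{2})^2\cdot16 = 4(1+\sqrt2)^2$, and take the infimum over $A$ to obtain (\ref{ora_ineq_completion_learning1}). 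I do not expect a genuine technical obstacle here: the entire content is the conceptual reduction via $A^\ast$, and the single point deserving care is checking that the centering of ${\bf M}$ is consistent with this reference matrix, so that Theorem \ref{th:main} can be invoked unchanged.
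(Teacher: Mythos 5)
Your proposal is correct and is essentially the paper's own (implicit) argument: the paper states Theorem \ref{th:completion_learning} without a separate proof, presenting it as a reformulation of Theorem \ref{th:completion_bounded}, and your reduction via the conditional-mean matrix $A^\ast=m_1m_2\,\E(YX)$, the normal equation $\E(YX)=\E(\langle A^\ast,X\rangle X)$, and the Pythagorean identity $R(A)=R(A^\ast)+\|A-A^\ast\|_{L_2(\Pi)}^2$ is precisely what makes that reformulation rigorous, since the proof of Theorem \ref{th:main} uses the model assumption only through the centering identity and Lemma \ref{lem:completion_bounded} never uses the existence of $A_0$. The constant bookkeeping for (\ref{ora_ineq_completion_learning1}), using $m_1m_2/(m_1\wedge m_2)=M$ and $16\left(\frac{1+\sqrt{2}}{2}\right)^2=4(1+\sqrt{2})^2$, is also exactly as intended.
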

This theorem can be also viewed as a result about the approximate
sparsity. We do not know whether the true underlying model is
described by some matrix $A_0,$ but we can guarantee that our
estimator is not far from the best approximation provided by
matrices $A$ with small rank or small nuclear norm.

Note that the results of Theorem~\ref{th:completion_learning} are
uniform over the class of distributions
$$
{\cal P}_{\eta} = \left\{ P_{XY}: \ X\sim \Pi_0, \ |Y|\le \eta
\,{\rm (a.s.)} \, \right\},
$$
where $\Pi_0$ is the uniform distribution on $\mathcal X$, and
$\eta>0$ is a constant. The corresponding lower bound is given in
the next theorem.
\begin{theorem}\label{th:lower_learning} Let $n,m_1,m_2,r$ be as in
Theorem~\ref{th:lower}. Let $(X_i,Y_i)$ be i.i.d. realizations of a
random pair $(X,Y)$ with distribution $P_{XY}$. Then
\begin{equation}\label{eq:th:lower_learning}
\inf_{\hat{A}}\ \sup_{{\rm rank}(A)\le r}\ \sup_{ P_{XY}\in\,{\cal
P}_{\eta}} \P \bigg(R({\hat A}) \geq R(A) + c\eta^2 \frac{Mr}{n}
\bigg)\ \geq\ \beta,
\end{equation}
where $\beta\in(0,1)$ and $c>0$ are absolute constants.
\end{theorem}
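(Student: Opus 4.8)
The plan is to reduce the excess-risk lower bound to the Frobenius-error lower bound already established in the proof of Theorem~\ref{th:lower_bounded}. The bridge is the elementary fact that, whenever $\E(Y|X)=\langle A_0,X\rangle$, the excess prediction risk over $A_0$ is exactly the $L_2(\Pi_0)$ estimation error, so the prediction-risk statement and the Frobenius-error statement are identical events in this setting.

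First I would reuse the two-point (Bernoulli) construction from the proof of Theorem~\ref{th:lower_bounded}: the finite family $\mathcal{B}(\mathcal{C}')$ of matrices of rank at most $r$, together with the single-observation laws $P_{A_0}$ under which $X\sim\Pi_0$ and $Y=\pm\eta$ with probabilities $1/2\pm\langle A_0,X\rangle/(2\eta)$, for $A_0\in\mathcal{B}(\mathcal{C}')$. As verified there, these satisfy $|Y|\le\eta$ almost surely and $X\sim\Pi_0$, so $P_{A_0}\in\mathcal{P}_\eta$; moreover they obey the same Varshamov--Gilbert separation and Kullback--Leibler bounds, and I denote by $\P_{A_0}$ the associated $n$-fold product measure.

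The one genuinely new step is an exact bias--variance identity. Under $P_{A_0}$ we have $\E(Y|X)=\langle A_0,X\rangle$, so expanding $R(A)=\E[(Y-\langle A,X\rangle)^2]$ around $A_0$ makes the cross term vanish, giving $R(A)-R(A_0)=\E[\langle A-A_0,X\rangle^2]=\|A-A_0\|_{L_2(\Pi_0)}^2=(m_1m_2)^{-1}\|A-A_0\|_2^2$, where the last equality holds because $X$ is uniform on the completion basis. Applying this conditionally on the sample to the data-dependent estimator $\hat A$ shows that $A_0$ minimizes $R$ and that the event $\{R(\hat A)\ge R(A_0)+c\eta^2 Mr/n\}$ coincides with $\{(m_1m_2)^{-1}\|\hat A-A_0\|_2^2\ge c\eta^2 Mr/n\}$.

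Finally, selecting the competitor $A=A_0$ (of rank at most $r$) in both suprema, the left-hand side of (\ref{eq:th:lower_learning}) is bounded below by $\sup_{A_0\in\mathcal{B}(\mathcal{C}')}\P_{A_0}\big((m_1m_2)^{-1}\|\hat A-A_0\|_2^2\ge c\eta^2 Mr/n\big)$, which is precisely the quantity shown to exceed $\beta$ in Theorem~\ref{th:lower_bounded}. No step is harder than in that theorem; the only point to get right is the risk identity $R(A)-R(A_0)=\|A-A_0\|_{L_2(\Pi_0)}^2$, which converts the prediction-risk statement into the already-proven estimation-error bound.
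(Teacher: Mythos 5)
Your proof is correct and takes essentially the same route as the paper: the bias--variance identity $R(A)-R(A_0)=\|A-A_0\|_{L_2(\Pi_0)}^2=(m_1m_2)^{-1}\|A-A_0\|_2^2$ (valid since the cross term vanishes when $\E(Y|X)=\langle A_0,X\rangle$) converts the excess-risk event into the Frobenius-error event, which is then handled by the lower bound of Theorem~\ref{th:lower_bounded}. The only cosmetic difference is that you re-open the Bernoulli construction inside that theorem's proof, whereas the paper simply restricts the supremum over ${\cal P}_{\eta}$ to the subclass ${\cal P}_{A,\eta}$ and invokes Theorem~\ref{th:lower_bounded} as a black box.
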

\begin{proof} For $\E(Y|X)=\langle A_0, X\rangle$ we have
$R(A)=\|A-A_0\|_{L_2(\Pi)}^2 + \sigma^2 = (m_1m_2)^{-1}\|A-A_0\|_2^2
+ \sigma^2$, where $\sigma^2=\E\big[(Y-\E(Y|X))^2\big]$. Thus, using
Theorem~\ref{th:lower_bounded} we get
\begin{eqnarray*}\label{eq:th:lower_learning1}
&&\sup_{{\rm rank}(A)\le r}\ \sup_{ P_{XY}\in\,{\cal P}_{\eta}} \P
\bigg(R({\hat A}) \geq R(A) + c\eta^2 \frac{Mr}{n} \bigg)\\
&& \geq \qquad \sup_{{\rm rank}(A)\le r}\ \sup_{P_{XY}\in\,{\cal
P}_{A,\eta}} \P \bigg(\frac1{m_1m_2}\|\hat{A}-A\|^2_{2}> c\eta^2
\frac{Mr}{n} \bigg) > \beta.
\end{eqnarray*}

\end{proof}

\noindent Inequalities (\ref{ora_ineq_completion_learning1}) and
(\ref{eq:th:lower_learning}) imply minimax rate optimality of ${\hat
A}^\lambda$ up to a logarithmic factor in the statistical learning
setting.

\subsection{Risks bounds in spectral norm}

The results of the previous sections on the Frobenius norm can be
extended to the spectral norm. In this subsection we consider the
USR matrix completion problem, i.e, we assume that the matrices
$X_i$ are i.i.d. uniformly distributed on $\mathcal X$, which
implies that $\|A\|_2^2 = (m_1m_2)^{-1}\|A\|_2^2$ for all matrices
$A\in \R^{m_1\times m_2}$.

\begin{theorem}\label{th:completion_spectral}
  Let $X_i$ be i.i.d. uniformly distributed on $\mathcal{X}$.
  Consider the estimator $\hat A^\lambda$ defined in (\ref{ERM1}).
  If $\lambda\geq \|\mathbf M\|_\infty$, then
  $$
\|\hat{A}^\lambda - A_0\|_\infty \leq \frac{3}{2}m_1m_2\lambda.
  $$
\end{theorem}

\begin{proof}
We have
\begin{equation*}
  \|\hat{A}^{\lambda} - A_0\|_\infty \leq \|\hat{A}^{\lambda} - \mathbf{X}\|_\infty +
  m_1m_2\|\mathbf M\|_\infty,
\end{equation*}
where we recall that $\mathbf X =
\frac{m_1m_2}{n}\sum_{i=1}^nY_iX_i$, $\mathbb E ({\bf X}) = A_0$ and
$\mathbf M$ is defined in (\ref{randM}). In view of (\ref{alambda}),
we clearly have $\|\hat{A}^{\lambda} - \mathbf{X}\|_\infty\leq
\lambda m_1m_2/2$. The result follows immediately since $\|\bf
M\|_\infty\leq \lambda$.
\end{proof}

As a consequence of the above theorem, we can derive the optimal
rate (up a to logarithmic factor) of USR matrix completion for the
spectral norm when the noise is sub-exponential or in the
statistical learning setting.

\begin{theorem}
Let one of the sets of conditions (i) or (ii) in Corollary
\ref{cor:completion} be satisfied. Then, with probability at least
$1-3/(m_1+m_2)$, we have
\begin{equation*}
\|\hat A^\lambda - A_0\|_\infty \leq C C_* c_*
\sqrt{m_1m_2}\sqrt{\frac{(m_1 \vee m_2) \log m}{n}},
\end{equation*}
where $C>0$ is an absolute constant.
\end{theorem}
\begin{proof}
The proof of this result is immediate by combining Theorem
\ref{th:completion_spectral} and Lemmas
\ref{lem:completion_bounded}, \ref{lem:tau1} and \ref{lem:tau2}.
\end{proof}

\begin{theorem}
(i) Let the conditions of Theorem \ref{th:lower_completion} be
satisfied. Then
\begin{equation}
\inf_{\hat{A}}
\sup_{\substack{A_0\in\,{\cal A}(r,a)
}}
\P_{A_0}\bigg(\|\hat{A}-A_0\|_\infty> c(\sigma \wedge
a)\sqrt{m_1m_2} \sqrt{\frac{m_1 \vee m_2}{n}} \bigg)\ \geq\ \beta,
\end{equation}
where $\beta\in(0,1)$ and $c>0$ are absolute constants.

(ii) Let the conditions of Theorem \ref{th:lower_bounded} be
satisfied. Then
\begin{equation}\label{eq:th:lower_bounded2a}
\inf_{\hat{A}}\ \sup_{{\rm rank}(A_0)\le r}\ \sup_{P_{XY}\in\,{\cal
P}_{A_0,\eta}} \P \bigg(\|\hat{A}-A_0\|_\infty> c\eta \sqrt{m_1m_2}
\sqrt{\frac{m_1 \vee m_2}{n}} \bigg)\ \geq\ \beta,
\end{equation}
where $\beta\in(0,1)$ and $c>0$ are absolute constants.
\end{theorem}

\begin{proof}
Note first that, in the USR matrix completion problem, Assumption
\ref{RI} is satisfied with $\delta_r=0$ and $\mu=\sqrt{m_1m_2}$.

We prove part (i) of the theorem. Consider the set of matrices
$\mathcal A_0$ introduced in the proof of Theorem \ref{th:lower}.
For any two distinct matrices $A_1,A_2$ of $\mathcal A_0$, we have
\begin{equation}\label{eq:lower_spectral_1}
  \|A_1 - A_2\|_\infty\geq \sqrt{\frac{\gamma}{16}}(\sigma\wedge
  a)\sqrt{m_1m_2}\sqrt{\frac{m_1 \vee m_2}{n}}.
\end{equation}
Indeed, if (\ref{eq:lower_spectral_1}) does not hold, we get
$$
\|A_1 - A_2\|_2^2 \leq \mathrm{rank}(A_1-A_2) \|A_1 - A_2\|_\infty^2
< \frac{\gamma}{16}(\sigma\wedge
  a)^2 m_1m_2\frac{(m_1 \vee m_2)r}{n},
$$
since $\mathrm{rank}(A_1-A_2)\leq r$ by construction of $\mathcal
A_0$. This contradicts (\ref{eq: condition A}).

Next, (\ref{eq: condition C}) is satisfied for any $\alpha>0$ if
$\gamma
>0$ is chosen as a sufficiently small numerical constant depending
on $\alpha$.

Combining (\ref{eq:lower_spectral_1}) with (\ref{eq: condition C})
and Theorem 2.5 in \cite{tsy_09} gives the result.

The proof of (ii) follows the same arguments.
\end{proof}

\subsection{Sharp oracle inequalities for the Lasso}\label{subsec:lasso}
As we already mentioned in Example 4 and in the remark after Theorem
\ref{th:main_RE}, one can exploit (\ref{eq:th:main_RE}) to derive
sparsity oracle inequalities for the usual Lasso. This is detailed
in the present subsection. It is noteworthy that the obtained
inequalities are sharp (i.e., with leading constant $1$), which was
not achieved in the previous work on the Lasso.

Note that, if $m_1=m_2=p$ and $A$ and $X_i$ are diagonal matrices,
then the trace regression model (\ref{tracereg1}) becomes
$$
Y_i=x_i^{\top}\beta^*+ \xi_i,\quad i=1,\ldots,n,
$$
where $x_i,\beta^* \in \R^{p}$ denote the vectors of diagonal
elements of $X_i, A_0,$ respectively. Set $\mathbb
X=(x_1,\ldots,x_n)^{\top}\in \mathbb R^{n\times p}$ to be the design
matrix of this linear regression model. For a vector
$z=(z^{(1)},\ldots,z^{(d)})\in \R^d,$  define $|z|_{q}=
\left(\sum_{j=1}^d |z^{(j)}|^q \right)^{1/q}$ for $1\leq q <\infty$
and $|z|_\infty = \max_{1\leq j \leq d}|z^{(j)}|$.

Assume in what follows that $x_i$ are fixed. Then for $A={\rm
diag}(\beta)$ we have $\|A\|_{L_2(\Pi)}^2 = n^{-1}|\mathbb
X\beta|_2^2$ , where $\mathrm{diag}(\beta)$ denotes the diagonal
$p\times p$ matrix with the components of $\beta$ on the diagonal.
We will assume without loss of generality that the diagonal elements
of the Gram matrix $\frac{1}{n}\mathbb X^{\top}\mathbb X$ are not
larger than $1$ (the general case is obtained from this by simple
rescaling).

The estimator $\hat{A}^{\lambda}$ defined in (\ref{ER_fixed})
becomes the usual Lasso estimator
$$
\hat{\beta}^{\lambda} =
\arg\min_{\beta\in\R^p}\left\{\frac{1}{n}\sum_{i=1}^n(Y_i - x_i^\top
\beta)^2 + \lambda |\beta|_1\right\}.
$$

For a vector $\beta\in \R^p,$ we set, with a little abuse of
notation, $\mu_{c_0}(\beta) =\mu_{c_0}(\mathrm{diag}(\beta)),$
$\mu(\beta)=\mu_5(\beta)$. Let $M(\beta)$ denote the number of
nonzero components of $\beta$.

For simplicity, the result is stated only in the case of Gaussian
noise.

\begin{theorem}\label{th:sharpLasso}
Let $\xi_i$ be i.i.d. Gaussian ${\cal N}(0,\sigma^2)$ and let the
diagonal elements of matrix $\frac{1}{n}\mathbb X^{\top}\mathbb X$
be not larger than $1$. Take
$$
\lambda = C\sigma \sqrt{\frac{\log p}{n}},
$$
where $C=3b\sqrt{2}, b\geq 1.$ Then, with probability at least
$1-\frac{1}{p^{b^2-1}\sqrt{\pi\log p}}$, we have
\begin{equation}\label{eq:th:sharpLasso}
\frac{1}{n}|\mathbb X(\hat\beta^{\lambda} - \beta^*)|_2^2 \leq
\inf_{\beta \in \R^p}\left\{ \frac{1}{n}|\mathbb X(\beta -
\beta^*)|_2^2 + C^2\sigma^2\frac{\mu^2(\beta)M(\beta)\log p}{n}
\right\}.
\end{equation}
\end{theorem}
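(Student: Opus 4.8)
The plan is to recognize Theorem~\ref{th:sharpLasso} as a direct specialization of the Restricted-Eigenvalue version of the oracle inequality, Theorem~\ref{th:main_RE}, to diagonal matrices. First I would set $A=\mathrm{diag}(\beta)$, $X_i=\mathrm{diag}(x_i)$ and $A_0=\mathrm{diag}(\beta^*)$, and take $\mathbb A$ to be the linear subspace of all diagonal $p\times p$ matrices. Under this identification one has $\langle A,X_i\rangle=x_i^\top\beta$, $\|A\|_{L_2(\Pi)}^2=n^{-1}|\mathbb X\beta|_2^2$ (the design is fixed, so each $\Pi_i$ is a Dirac mass), $\|A\|_1=|\beta|_1$, $\mathrm{rank}(A)=M(\beta)$, and $\mu_5(A)=\mu(\beta)$. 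Consequently the estimator of \eqref{ER_fixed} is exactly the Lasso $\hat\beta^\lambda$, and inequality \eqref{eq:th:main_RE} becomes precisely \eqref{eq:th:sharpLasso} once $\lambda^2\mu^2(\beta)M(\beta)$ is read as $C^2\sigma^2\mu^2(\beta)M(\beta)\log p/n$. Thus the statement reduces to verifying, with the claimed probability, the hypothesis $\lambda\ge 3\|{\bf M}\|_\infty$ required by Theorem~\ref{th:main_RE}.

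Second, I would compute ${\bf M}$ explicitly. Since the $X_i$ are deterministic, $\E(Y_iX_i)=(x_i^\top\beta^*)X_i$, so $Y_iX_i-\E(Y_iX_i)=\xi_iX_i$ and
$$
{\bf M}=\frac1n\sum_{i=1}^n \xi_i X_i=\mathrm{diag}\!\left(\frac1n\sum_{i=1}^n \xi_i x_i\right).
$$
Because the operator norm of a diagonal matrix is the largest absolute value of its diagonal entries,
$$
\|{\bf M}\|_\infty=\max_{1\le j\le p}\left|\frac1n\sum_{i=1}^n \xi_i x_i^{(j)}\right|.
$$
For each fixed $j$ this is a centered Gaussian variable with variance $\sigma^2 n^{-2}\sum_i (x_i^{(j)})^2\le \sigma^2/n$, the bound coming exactly from the normalization that the diagonal entries of $n^{-1}\mathbb X^\top\mathbb X$ do not exceed $1$.

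Third, I would bound $\Prob{\|{\bf M}\|_\infty>\lambda/3}$ by a union bound over the $p$ coordinates together with the sharp Gaussian tail inequality $\Prob{{\cal N}(0,1)>u}\le (u\sqrt{2\pi})^{-1}e^{-u^2/2}$. With $C=3a\sqrt2$ the threshold is $\lambda/3=a\sigma\sqrt{2\log p/n}$, which corresponds to $u=a\sqrt{2\log p}$ standard deviations; then $e^{-u^2/2}=p^{-a^2}$ and the prefactor yields the factor $(a\sqrt{\pi\log p})^{-1}$ per coordinate, so that multiplying by $p$ gives exactly $\big(a\,p^{a^2-1}\sqrt{\pi\log p}\big)^{-1}\le\big(p^{a^2-1}\sqrt{\pi\log p}\big)^{-1}$ (using $a\ge1$). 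On the complementary event the hypothesis of Theorem~\ref{th:main_RE} holds and \eqref{eq:th:sharpLasso} follows.

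The computations are essentially routine; the one place requiring care is the last step, where the precise Gaussian tail bound (rather than a cruder exponential estimate) must be used in order to reproduce the exact probability $1-(p^{a^2-1}\sqrt{\pi\log p})^{-1}$, including the $\sqrt{\pi\log p}$ denominator. The only other points to double-check are that $\mathbb A$ being a genuine linear subspace legitimizes invoking Theorem~\ref{th:main_RE} rather than Theorem~\ref{th:main}, and that the restricted-eigenvalue constant in the matrix formulation indeed coincides with the vector quantity $\mu_5(\mathrm{diag}(\beta))$ fixed in the statement.
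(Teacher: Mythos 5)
Your proposal is correct and takes essentially the same route as the paper: the paper's proof is precisely ``combine Theorem \ref{th:main_RE} with a standard Gaussian tail bound,'' which gives $\|{\bf M}\|_\infty=\bigl|\frac{1}{n}\sum_{i=1}^n\xi_i x_i\bigr|_\infty\leq a\sigma\sqrt{2\log p/n}=\lambda/3$ with probability at least $1-\frac{1}{p^{a^2-1}\sqrt{\pi\log p}}$. Your write-up simply makes explicit the diagonal-matrix identification (which the paper sets up in the discussion preceding the theorem) and the union-bound arithmetic, both of which check out.
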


\begin{proof} Combine Theorem \ref{th:main_RE} and a standard bound on the tail of the Gaussian
distribution, which assures that with probability at least
$1-\frac{1}{p^{b^2-1}\sqrt{\pi\log p}}$\,,
$$
\|\mathbf
M\|_\infty=\left|\frac{1}{n}\sum_{i=1}^n\xi_ix_i\right|_\infty \leq
b\sigma \sqrt{\frac{2\log p}{n}}\,.
$$

\end{proof}

Given $\beta\in \R^p$ and $J\subset \{1,\ldots,p\},$ denote by
$\beta_J$ the vector in $\R^p$ which has the same coordinates as
$\beta$ on $J$ and zero coordinates on the complement $J^c$ of $J.$

We recall the Restricted Eigenvalue condition of \cite{BRT09}:

\medskip
\textbf{Condition $\mathbf{RE}(s,c_0)$.} {\it For some integer $s$
such that $1\le s\le p$,  and a positive number $ c_0$ the following
condition holds:}
$$
\kappa(s, c_0) \triangleq \min_{\substack{J\subseteq \{1,\dots,p\},\\
|J|\le s}} \ \ \min_{\substack{u\in \R^p,\, u \neq 0, \\
|u_{J^c}|_1\le c_0|u_{J}|_1}}
\ \ \frac{|\mathbb X u|_2}{\sqrt{n}|u_{J}|_2} \,> \, 0.
$$
\medskip

We have the following corollary.

\begin{corollary} \label{cor:sharpLasso} Let the assumptions of Theorem
\ref{th:sharpLasso} hold, and let condition $\mathbf{RE}(s,5)$ be
satisfied for some $1\leq s \leq p$. Then, with probability at least
$1-\frac{1}{p^{b^2-1}\sqrt{\pi\log p}},$
\begin{equation}\label{eq:cor:sharpLasso}
\frac{1}{n}|\mathbb X(\hat\beta^{\lambda} - \beta^*)|_2^2 \leq
\inf_{\beta \in \R^p\,:\, M(\beta)\leq s}\left\{ \frac{1}{n}|\mathbb
X(\beta - \beta^*)|_2^2 +
\frac{C^2\sigma^2}{\kappa^2(s,5)}\frac{M(\beta)\log p}{n} \right\}.
\end{equation}
\end{corollary}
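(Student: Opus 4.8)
The plan is to deduce the corollary directly from the oracle inequality \eqref{eq:th:sharpLasso} of Theorem \ref{th:sharpLasso} by showing that, whenever $M(\beta)\le s$, the quantity $\mu(\beta)=\mu_5(\mathrm{diag}(\beta))$ is controlled by the restricted eigenvalue constant, namely $\mu(\beta)\le 1/\kappa(s,5)$. Substituting this bound and restricting the infimum in \eqref{eq:th:sharpLasso} to the vectors $\beta$ with $M(\beta)\le s$ then yields \eqref{eq:cor:sharpLasso} on the same event. Thus the whole argument reduces to unfolding the matrix definition of $\mu_5$ in the diagonal case and matching it with the cone appearing in Condition $\mathbf{RE}(s,5)$.

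First I would identify the support of $A=\mathrm{diag}(\beta)$. Writing $J=\{j:\beta_j\neq0\}$, the SVD of $A$ has singular values $|\beta_j|$ and singular vectors $\pm e_j$, so independently of the signs of the $\beta_j$ the support spaces satisfy $S_1=S_2=\mathrm{span}\{e_j:j\in J\}$, and $P_{S_1^\perp}=P_{S_2^\perp}$ is the coordinate projector onto $J^c$. Consequently, for any diagonal $B=\mathrm{diag}(u)$ with $u\in\R^p$, the matrix $\mathcal{P}_A^\perp(B)=P_{S_1^\perp}BP_{S_2^\perp}$ is diagonal and carries the entries $u_j$, $j\in J^c$ (the others being zero), whereas $\mathcal{P}_A(B)=B-\mathcal{P}_A^\perp(B)$ carries the entries $u_j$, $j\in J$. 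Since for a diagonal matrix the nuclear norm reduces to the $\ell_1$-norm of its diagonal and the Frobenius norm to the $\ell_2$-norm, this gives $\|\mathcal{P}_A^\perp(B)\|_1=|u_{J^c}|_1$, $\|\mathcal{P}_A(B)\|_1=|u_J|_1$ and $\|\mathcal{P}_A(B)\|_2=|u_J|_2$. Hence the cone ${\mathbb C}_{A,5}$ coincides with $\{u\in\R^p:|u_{J^c}|_1\le 5|u_J|_1\}$, and, using $\|B\|_{L_2(\Pi)}=|\mathbb{X}u|_2/\sqrt n$, we obtain
\[
\mu_5(\mathrm{diag}(\beta))=\sup_{u:\,|u_{J^c}|_1\le 5|u_J|_1}\frac{\sqrt n\,|u_J|_2}{|\mathbb{X}u|_2}=\Bigl(\inf_{u:\,|u_{J^c}|_1\le 5|u_J|_1}\frac{|\mathbb{X}u|_2}{\sqrt n\,|u_J|_2}\Bigr)^{-1}.
\]

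The quantity on the right is precisely the reciprocal of the restricted eigenvalue constant attached to the single index set $J$. When $M(\beta)=|J|\le s$, the set $J$ is admissible in the outer minimization defining $\kappa(s,5)$, so $\kappa(s,5)\le\inf_{u:\,|u_{J^c}|_1\le 5|u_J|_1}|\mathbb{X}u|_2/(\sqrt n\,|u_J|_2)$, which gives $\mu(\beta)\le 1/\kappa(s,5)$. Plugging $\mu^2(\beta)M(\beta)\le M(\beta)/\kappa^2(s,5)$ into \eqref{eq:th:sharpLasso} and keeping only $\beta$ with $M(\beta)\le s$ produces \eqref{eq:cor:sharpLasso} with probability at least $1-1/(p^{a^2-1}\sqrt{\pi\log p})$. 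I do not expect any serious obstacle; the only point requiring care is the bookkeeping in the diagonal SVD with possibly negative or zero entries, handled by observing that the support spaces $S_1,S_2$ do not depend on the signs and degenerate to a coordinate subspace, so that all matrix norms collapse to the corresponding vector norms.
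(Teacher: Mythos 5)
Your proof is correct and takes essentially the same route as the paper's: in the diagonal case you identify the cone ${\mathbb C}_{A,5}$ with $\{u\in\R^p: |u_{J^c}|_1\le 5|u_J|_1\}$ and the relevant matrix norms with vector norms, deduce $\mu_5(\mathrm{diag}(\beta))\le 1/\kappa(s,5)$ whenever $M(\beta)\le s$, and then plug this into Theorem \ref{th:sharpLasso}. The paper's proof is just a terser version of the same argument (stating directly that $\kappa(s,c_0)=\inf_{u\neq 0,\,M(u)\leq s}1/\mu_{c_0}(\mathrm{diag}(u))$), whereas you additionally spell out the sign bookkeeping in the diagonal SVD.
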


\begin{proof}
Recall that $e_j(p)$ denote the canonical basis vectors of $\R^p$.
For any $p\times p$ diagonal matrix $A=\mathrm{diag}(\beta)$ with
support $(S_1,S_2),$ $S_1=S_2=\{e_j(p),j\in J\},$ where $J\subset
\{1,\ldots,p\}$ has cardinality $|J|\leq s,$ and an arbitrary
$p\times p$ diagonal matrix $B=\mathrm{diag}(u),$ where $u\in \R^p,$
we have
$$
\|\mathcal{P}_A(B)\|_{1} = |u_J|_1,\quad \|\mathcal{P}_A(B)\|_{2} =
|u_J|_2, \quad \|\mathcal{P}_A^\perp(B)\|_1 = |u_{J^c}|_1
$$
and
$$
\mathbb C_{A,c_0} = \left\{\, \mathrm{diag}(u):\, u\in \R^p,
|u_{J^c}|_1 \leq c_0 |u_J|_1 \right\}, \quad \|B\|_{L_2(\Pi)} =
\frac{1}{\sqrt{n}}|\mathbb X u|_2.
$$
Thus,
\begin{eqnarray*}
\frac1{\mu_{c_0}(A)}&=& \sup_{B\ne 0: \ B\in \mathbb
C_{A,c_0}}\frac{\|B\|_{L_2(\Pi)}}{\|\mathcal{P}_A(B)\|_{2}}
\\ &=& \ \ \min_{\substack{u\in \R^p,\, u \neq 0, \\
|u_{J^c}|_1\le c_0|u_{J}|_1}} \ \ \frac{|\mathbb X
u|_2}{\sqrt{n}|u_{J}|_2} \, 
\ge \kappa(s,c_0)
\end{eqnarray*} Since
Condition $\mathbf{RE}(s,5)$ is satisfied, Theorem
\ref{th:sharpLasso} yields the result.
\end{proof}

{\sc Remark~2.} 
Oracle inequalities (\ref{eq:th:sharpLasso}) and
(\ref{eq:cor:sharpLasso}) extend straightforwardly to the model
\begin{equation}\label{nonpar}
Y_i=f_i+ \xi_i, \ i=1,\dots,n,
\end{equation}
where $f_i$ are arbitrary fixed values and not necessarily
$f_i=x_i^{\top}\beta^*$. This setting is interesting in the context
of aggregation. Then $x_1,\dots,x_n$ are vectors of values of some
given dictionary of $p$ functions at $n$ given points and $f_i$ are
the values of an unknown regression function at the same points.
Under the model (\ref{nonpar}), inequalities
(\ref{eq:th:sharpLasso}) and (\ref{eq:cor:sharpLasso}) hold true
with the only difference that $\mathbb X\beta^*$ should be replaced
by the vector $f=(f_1,\dots, f_n)^\top $. With such a modification,
(\ref{eq:cor:sharpLasso}) improves upon Theorem 6.1 of \cite{BRT09}
where the leading constant is greater than~1.

\section{Control of the stochastic error}\label{sto}

In this section, we obtain the probability inequalities for the
stochastic error $\|{\bf M}\|_\infty$. For brevity, we will write
throughout $\|\cdot\|_\infty=\|\cdot\|$. The following proposition
is an immediate consequence of the matrix version of Bernstein's
inequality (Corollary 9.1 in \cite{tropp10}).

\begin{proposition}\label{prop:Bernstein_bounded}
Let $Z_1,\ldots,Z_n$ be independent random matrices with dimensions
$m_1\times m_2$ that satisfy $\E(Z_i) = 0$ and $\|Z_i\|\leq U$
almost surely for some constant $U$ and all $i=1,\dots,n$. Define
$$
\sigma_Z = \max\Bigg\{\,\Big\|\frac1{n}\sum_{i=1}^n\E
(Z_iZ_i^\top)\Big\|^{1/2},\, \,\Big\|\frac1{n}\sum_{i=1}^n\E
(Z_i^\top Z_i)\Big\|^{1/2}\Bigg\} .
$$
Then, for all $t>0,$ with probability at least $1-e^{-t}$ we have
$$
\left\| \frac{Z_1 + \cdots + Z_n}{n}  \right\| \leq 2\max\left\{
\sigma_Z\sqrt{\frac{t + \log (m)}{n}}\,, \ \ U \frac{t + \log
(m)}{n} \right\}\,,
$$
where $m=m_1+m_2$.
\end{proposition}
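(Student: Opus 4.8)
The plan is to invoke Tropp's rectangular matrix Bernstein inequality directly; the proposition is essentially a repackaging of its tail bound into the stated $\max$-form. Recall that Corollary~9.1 in \cite{tropp10}, applied to the sum $S=\sum_{i=1}^n Z_i$ of independent, zero-mean $m_1\times m_2$ matrices with $\|Z_i\|\le U$, yields for every $s>0$
\[
\P\bigl(\|S\|\ge s\bigr)\ \le\ m\,\exp\!\left(\frac{-s^2/2}{\nu+Us/3}\right),
\qquad
\nu:=\max\Bigl\{\bigl\|\textstyle\sum_{i}\E(Z_iZ_i^\top)\bigr\|,\ \bigl\|\textstyle\sum_{i}\E(Z_i^\top Z_i)\bigr\|\Bigr\},
\]
where the dimensional factor $m=m_1+m_2$ arises from the Hermitian dilation used to reduce the rectangular case to the symmetric one. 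By the definition of $\sigma_Z$ we have $\nu=n\sigma_Z^2$, whence $\sqrt{\nu\tau}=\sigma_Z\sqrt{n\tau}$ for any $\tau>0$.

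It then remains to convert this exponential tail into the clean threshold. Writing $\tau:=t+\log(m)$, I would take
\[
s\ =\ 2\max\bigl\{\sigma_Z\sqrt{n\tau},\ U\tau\bigr\},
\]
so that $\|S\|/n\le 2\max\{\sigma_Z\sqrt{\tau/n},\,U\tau/n\}$ is exactly the asserted bound (using $\sqrt{n\tau}/n=\sqrt{\tau/n}$), and verify that for this $s$ the exponent is at most $-\tau$, i.e. $s^2/2\ge \tau(\nu+Us/3)$. Since $m\exp(-\tau)=e^{-t}$, establishing this single inequality completes the proof.

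The only real work is this elementary quadratic estimate, which I would handle by splitting $s^2$ into two halves and using each lower bound on $s$ separately. Since $s\ge 2\sqrt{\nu\tau}$, the first half gives $\tfrac12 s^2\ge 2\nu\tau$; since $s\ge 2U\tau$, the second half gives $\tfrac12 s^2=\tfrac12 s\cdot s\ge \tfrac12 s\,(2U\tau)=U\tau s\ge \tfrac23 U\tau s$. Adding the two bounds yields $s^2\ge 2\nu\tau+\tfrac23 U\tau s$, that is $s^2/2\ge \tau(\nu+Us/3)$, as required. No step here presents a genuine obstacle: the substance of the proposition lies entirely in the matrix Bernstein inequality of \cite{tropp10}, and the remaining argument is the standard conversion of a Bernstein-type exponent into a two-regime threshold, with the slight over-provisioning in the second term (taking $2U\tau$ rather than the sharper $\tfrac43 U\tau$) only serving to keep the constants uniform.
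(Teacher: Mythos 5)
Your proposal is correct and follows exactly the route the paper takes: the paper simply declares Proposition~\ref{prop:Bernstein_bounded} to be an immediate consequence of Corollary~9.1 in \cite{tropp10}, which is the rectangular matrix Bernstein bound you invoke. Your only addition is to write out the routine tail-to-threshold conversion (the quadratic estimate $s^2/2\ge \tau(\nu+Us/3)$ for $s=2\max\{\sigma_Z\sqrt{n\tau},U\tau\}$, $\tau=t+\log m$), which the paper leaves implicit and which you verify correctly.
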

Furthermore, it is possible to replace the $L_\infty$-bound $U$ on
$\|Z\|$ in the above inequality by bounds on the weaker
$\psi_\alpha$-norms of $\|Z\|$ defined by
$$
U_Z^{(\alpha)} = \inf \Big\{u>0:\,  \E\exp(\|Z\|^\alpha/u^\alpha)
\le 2\Big\}, \quad \alpha\geq 1.
$$

\begin{proposition}\label{prop:Bernstein_unbounded}
Let $Z,Z_1,\ldots,Z_n$ be i.i.d. random matrices with dimensions
$m_1\times m_2$ that satisfy $\E(Z) = 0$. Suppose that
$U_Z^{(\alpha)} <\infty$ for some $\alpha\geq 1$. Then there exists
a constant $C>0$ such that, for all $t>0$, with probability at least
$1-e^{-t}$
$$
\left\| \frac{Z_1 + \cdots + Z_n}{n}  \right\| \leq C\max\left\{
\sigma_Z\sqrt{\frac{t + \log (m)}{n}}\,, \ \ U_Z^{(\alpha)}\left(
\log \frac{U_Z^{(\alpha)}}{\sigma_Z} \right)^{1/\alpha} \frac{t +
\log (m)}{n} \right\},
$$
where $m=m_1+m_2$.
\end{proposition}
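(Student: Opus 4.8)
The plan is to reduce to the bounded case (Proposition \ref{prop:Bernstein_bounded}) by a truncation argument, choosing the truncation level exactly so that the bounded--Bernstein deviation term reproduces the second term of the claimed inequality. First I would record the tail estimate implied by the definition of $U_Z^{(\alpha)}$: applying Markov's inequality to $\exp(\|Z\|^\alpha/(U_Z^{(\alpha)})^\alpha)$ gives
$$
\P(\|Z\|>s)\ \le\ 2\exp\Bigl(-\frac{s^\alpha}{(U_Z^{(\alpha)})^\alpha}\Bigr),\qquad s>0.
$$
I would then fix the level $U=U_Z^{(\alpha)}\bigl(\log(U_Z^{(\alpha)}/\sigma_Z)\bigr)^{1/\alpha}$ and split $Z_i=Z_i'+Z_i''$ with $Z_i'=Z_i\mathbf 1_{\{\|Z_i\|\le U\}}$ and $Z_i''=Z_i\mathbf 1_{\{\|Z_i\|>U\}}$, writing
$$
\frac1n\sum_{i=1}^n Z_i=\frac1n\sum_{i=1}^n\bigl(Z_i'-\E Z_i'\bigr)+\frac1n\sum_{i=1}^n\bigl(Z_i''-\E Z_i''\bigr),
$$
which is legitimate since $\E Z_i=0$ forces $\E Z_i'=-\E Z_i''$.

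For the first (bounded) sum I would invoke Proposition \ref{prop:Bernstein_bounded} with the almost sure bound $\|Z_i'-\E Z_i'\|\le 2U$. The point that keeps the variance term sharp is that truncation and centering only decrease the relevant second moment in the positive semidefinite order: $Z_i'(Z_i')^\top=Z_iZ_i^\top\mathbf 1_{\{\|Z_i\|\le U\}}\preceq Z_iZ_i^\top$, and centering subtracts a further PSD matrix, so $\|\frac1n\sum\E(Z_i'(Z_i')^\top)\|\le\sigma_Z^2$ and likewise for $Z_i^\top Z_i$; hence the variance proxy of the truncated matrices is at most $\sigma_Z$. Proposition \ref{prop:Bernstein_bounded} then yields, with probability at least $1-e^{-t}$, a bound of the form
$$
C\max\Bigl\{\sigma_Z\sqrt{\tfrac{t+\log m}{n}},\ U\,\tfrac{t+\log m}{n}\Bigr\},
$$
and by the choice of $U$ the second term is exactly $C\,U_Z^{(\alpha)}\bigl(\log(U_Z^{(\alpha)}/\sigma_Z)\bigr)^{1/\alpha}\frac{t+\log m}{n}$, i.e. the second term of the claim. (One also uses $\sigma_Z\le c_\alpha U_Z^{(\alpha)}$, so that $\log(U_Z^{(\alpha)}/\sigma_Z)$ is bounded below and $U$ is well defined; the degenerate case $U_Z^{(\alpha)}\asymp\sigma_Z$ is trivial and absorbed into $C$.)

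The main obstacle is the tail sum $\frac1n\sum(Z_i''-\E Z_i'')$, which is unbounded, so Proposition \ref{prop:Bernstein_bounded} does not apply to it directly. Two facts are in hand: the deterministic bias obeys $\|\E Z_i''\|\le\E\bigl(\|Z\|\mathbf 1_{\{\|Z\|>U\}}\bigr)$, which the tail estimate makes small, and the tail second moment again satisfies $\|\E(Z_i''(Z_i'')^\top)\|\le\sigma_Z^2$ by the same PSD monotonicity, so the tail carries the same variance proxy $\sigma_Z$. The delicate issue, and what I expect to be the hard part, is that one cannot simply enlarge $U$ so as to force $\max_i\|Z_i\|\le U$ with probability $1-e^{-t}$: the tail estimate would then require $U\asymp U_Z^{(\alpha)}(\log n+t)^{1/\alpha}$, which replaces the sharp factor $\log(U_Z^{(\alpha)}/\sigma_Z)$ by the larger $\log n$. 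Instead I would bound the matrix Laplace transform $\E\exp\bigl(\theta(Z''-\E Z'')\bigr)$ of the centered tail directly, for $\theta$ of order $1/U$, using the $\psi_\alpha$ tail to control its higher matrix moments through estimates of $\E\bigl[\|Z\|^{2(p-1)}ZZ^\top\mathbf 1_{\{\|Z\|>U\}}\bigr]$; summing the resulting series against $\theta^p/p!$ and optimizing the inner truncation scale is precisely what produces the factor $\log(U_Z^{(\alpha)}/\sigma_Z)$ rather than $\log n$. The upshot is that the centered tail contributes no more than the bounded part, and adding the two bounds, after a harmless rescaling of $t$ to account for the union over the two events, gives the stated inequality.
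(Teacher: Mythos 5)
First, a point of reference: the paper does not actually prove this proposition — its ``proof'' is a citation of Proposition 2 in \cite{Kol10} (the Hermitian case) together with the remark that the rectangular case follows by self-adjoint dilation. So you are attempting a self-contained proof, and your skeleton — outer truncation, bounded matrix Bernstein (Proposition \ref{prop:Bernstein_bounded}) for the truncated part with the variance proxy preserved by PSD monotonicity, and a direct matrix Laplace-transform treatment of the unbounded remainder with an inner truncation — is in fact the standard mechanism behind the cited result. Your handling of the bounded half is correct, including the choice $U=U_Z^{(\alpha)}\bigl(\log(U_Z^{(\alpha)}/\sigma_Z)\bigr)^{1/\alpha}$ that reproduces the second term of the claim, and you correctly diagnose why the naive ``enlarge $U$ until $\max_i\|Z_i\|\le U$'' route loses a $\log n$.

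The genuine gap is in the tail term, and it is quantitative, not just a matter of unfinished detail. Your control of the bias, $\|\E Z''\|\le \E\bigl(\|Z\|\mathbf{1}_{\{\|Z\|>U\}}\bigr)$, ``which the tail estimate makes small,'' is insufficient: integrating the $\psi_\alpha$ tail at your truncation level gives $\E\bigl(\|Z\|\mathbf{1}_{\{\|Z\|>U\}}\bigr)\asymp \sigma_Z\bigl(\log(U_Z^{(\alpha)}/\sigma_Z)\bigr)^{1/\alpha}$ (already the single term $U\,\P\{\|Z\|>U\}$ has this size), a quantity independent of $n$, while the bound you must prove vanishes as $n\to\infty$. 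This contaminates precisely the Laplace-transform step you sketch: writing $W=Z''-\E Z''$ and expanding $\E\bigl[\|W\|^{p-2}W^2\bigr]$ (after dilation), the event $\{\|Z\|\le U\}$, on which $W=-\E Z''$ deterministically, contributes terms of order $\|\E Z''\|^p$; with $\|\E Z''\|\approx\sigma_Z L^{1/\alpha}$, $L=\log(U_Z^{(\alpha)}/\sigma_Z)$, and $\theta\asymp 1/U$ these sum to $(\theta\|\E Z''\|)^2\approx\theta^2\sigma_Z^2L^{2/\alpha}$, inflating the variance proxy by $L^{2/\alpha}$ and hence the Gaussian term of the final bound by $L^{1/\alpha}$ — your argument as written proves a strictly weaker inequality than the statement. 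What the argument needs is $\|\E Z''\|\leq C\sigma_Z$, and the missing idea is that this requires exploiting cancellation from $\E Z=0$ and the second-moment structure rather than the norm bound: by Cauchy--Schwarz, $|u^\top(\E Z'')v|\le\bigl(\E(u^\top Zv)^2\bigr)^{1/2}\P\{\|Z\|>U\}^{1/2}\le\sigma_Z\sqrt{2\sigma_Z/U_Z^{(\alpha)}}\le\sqrt{2}\,\sigma_Z$. With that bound in place your computation closes; alternatively (and more cleanly, since this is essentially the proof of the result the paper cites) one can drop the outer truncation altogether and put the truncation inside the moment estimates for $Z$ itself, via $\E\bigl[\|Z\|^{p-2}ZZ^\top\bigr]\preceq \tau^{p-2}\,\E(ZZ^\top)+\E\bigl[\|Z\|^{p}\mathbf{1}_{\{\|Z\|>\tau\}}\bigr]I$ with $\tau=U_Z^{(\alpha)}\bigl(c\log(U_Z^{(\alpha)}/\sigma_Z)\bigr)^{1/\alpha}$, so that no bias term ever arises.
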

This is an easy consequence of Proposition 2 in \cite{Kol10}, which
provides an analogous result for Hermitian matrices $Z$. Its
extension to rectangular matrices stated in
Proposition~\ref{prop:Bernstein_unbounded} is straightforward via
the self-adjoint dilation, cf., for example, the proof of
Corollary~9.1 in \cite{tropp10}.

The next lemma gives a control of the stochastic error for USR
matrix completion in the statistical learning setting.

\begin{lemma}\label{lem:completion_bounded}
Let $X_i$ be i.i.d. uniformly distributed on $\mathcal X$. Assume
that $\max_{i=1,\dots,n}|Y_i|\le \eta$ almost surely for some
constant $\eta$. Then for any $t>0$ with probability at least
$1-e^{-t}$ we have
\begin{equation}\label{eq:lem:completion_bounded}
\|{\bf M}\| \le 2\eta\,\max\left\{ \sqrt{\frac{t+\log(m)}{(m_1\wedge
m_2)n}}\,, \ \frac{2(t+\log(m))}{n} \right\}.
\end{equation}
\end{lemma}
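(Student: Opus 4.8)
The plan is to apply the matrix Bernstein inequality of Proposition~\ref{prop:Bernstein_bounded} to the independent, mean-zero summands $Z_i := Y_i X_i - \mathbb{E}(Y_i X_i)$, for which ${\bf M} = n^{-1}\sum_{i=1}^n Z_i$. It then suffices to produce an almost-sure bound $U$ on $\|Z_i\|$ and to estimate the variance parameter $\sigma_Z$; matching these against the two terms inside the maximum in Proposition~\ref{prop:Bernstein_bounded} should yield exactly the asserted inequality, with $U = 2\eta$ feeding the linear term and $\sigma_Z \le \eta/\sqrt{m_1\wedge m_2}$ feeding the $\sqrt{\cdot}$ term.

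The uniform bound is immediate. Each $X_i$ is a single-entry matrix $e_j(m_1)e_k(m_2)^\top$, whose operator norm equals $1$, so $\|Y_iX_i\| = |Y_i|\le \eta$ almost surely; since $\|\mathbb{E}(Y_iX_i)\|\le \mathbb{E}\|Y_iX_i\|\le \eta$ by convexity of the norm, we get $\|Z_i\|\le 2\eta =: U$.

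The crux is the variance term, and this is where the sparse structure of the matrix completion basis enters. Dropping the (positive semidefinite) outer-product correction gives $\mathbb{E}(Z_iZ_i^\top)\preceq \mathbb{E}(Y_i^2 X_iX_i^\top)$. For $X_i=e_j(m_1)e_k(m_2)^\top$ one has $X_iX_i^\top = e_j(m_1)e_j(m_1)^\top$, a single diagonal entry; conditioning on $X_i$, bounding $\mathbb{E}(Y_i^2\mid X_i)\le \eta^2$, and averaging over the uniform law of $X_i$ on $\mathcal X$ collapses the sum to $\mathbb{E}(Y_i^2 X_iX_i^\top)\preceq (m_1m_2)^{-1}\sum_{j,k}\eta^2 e_j(m_1)e_j(m_1)^\top = (\eta^2/m_1)I_{m_1}$, so that $\|\mathbb{E}(Z_iZ_i^\top)\|\le \eta^2/m_1$. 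The symmetric computation with $X_i^\top X_i = e_k(m_2)e_k(m_2)^\top$ gives $\|\mathbb{E}(Z_i^\top Z_i)\|\le \eta^2/m_2$. Averaging over $i$ preserves these operator-norm bounds by the triangle inequality, whence $\sigma_Z\le \eta/\sqrt{m_1\wedge m_2}$.

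Substituting $U=2\eta$ and $\sigma_Z\le \eta/\sqrt{m_1\wedge m_2}$ into Proposition~\ref{prop:Bernstein_bounded} should reproduce (\ref{eq:lem:completion_bounded}) term by term. The only genuinely delicate point is the variance estimate: the gain of the factor $(m_1\wedge m_2)^{-1}$, which ultimately drives the fast rate, comes precisely from the fact that each basis matrix touches a single row and a single column, so that $X_iX_i^\top$ and $X_i^\top X_i$ are rank-one diagonal projectors averaging to a small multiple of the identity. I would expect the accounting of this factor, rather than any of the norm manipulations, to be the main thing worth stating carefully.
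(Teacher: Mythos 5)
Your proposal is correct and follows essentially the same route as the paper: apply Proposition~\ref{prop:Bernstein_bounded} to $Z_i=Y_iX_i-\E(Y_iX_i)$ with $U=2\eta$ and $\sigma_Z\le \eta/\sqrt{m_1\wedge m_2}$, the latter coming from $\E(Z_iZ_i^\top)\preceq \eta^2\,\E(X_iX_i^\top)=(\eta^2/m_1)I_{m_1}$ and its transpose analogue. The only difference is that you spell out the variance computation that the paper compresses into the statement $\sigma_X^2=1/(m_1\wedge m_2)$, $\sigma_Z\le\eta\sigma_X$ in (\ref{eq:norms_of_x}).
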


\begin{proof} We apply Proposition \ref{prop:Bernstein_bounded} with
$Z_i=Y_iX_i-\E(Y_iX_i)$. Recall that here $X_i$ are i.i.d. with the
same distribution as $X$ and $Y_i$ are not necessarily i.i.d.
Observe that
\begin{eqnarray}\label{eq:norms_of_x}
&&\|X\| = 1,\quad\|\E (X)\| = \sqrt{\frac{1}{m_1m_2}}\ , \quad
\sigma_{X}^2 = \frac1{m_1\wedge m_2} \ .
\end{eqnarray}
Therefore, $\|Z_i\|\le 2\eta$, $\sigma_Z\le \eta\sigma_X $, and the
result follows from Proposition \ref{prop:Bernstein_bounded}.
\end{proof}

We now consider the USR matrix completion with sub-exponential
errors. Recall that in this case we assume that the pairs
$(X_i,Y_i)$ are i.i.d. We have
\begin{eqnarray*}
\|{\bf M}\| &=& \left\| \frac{1}{n}\sum_{i=1}^n (Y_i X_i - \mathbb E
(Y_iX_i)) \right\|\\ &\leq& \left\| \frac{1}{n}\sum_{i=1}^n \xi_i
X_i \right\| + \left\| \frac{1}{n}\sum_{i=1}^n
\Big(\mathrm{tr}(A_0^\top X_i) X_i -
\mathbb E (\mathrm{tr}(A_0^\top X)X)\Big)  \right\|\\
&=& \Delta_1 + \Delta_2.
\end{eqnarray*}
We treat the terms $\Delta_1$ and $\Delta_2$ separately in the two
lemmas below.

\begin{lemma}\label{lem:tau1}
Let $X_i$ be i.i.d. uniformly distributed on $\mathcal X$, and the
pairs $(X_i,Y_i)$ be i.i.d. Assume that condition (\ref{subexp})
holds. Then there exists an absolute constant $C>0$ that can depend
only on $\alpha,c_1,\tilde c$ and such that, for all $t>0,$ with
probability at least $1-2e^{-t}$ we have
\begin{equation}\label{eq:lem:tau1}
\Delta_1 \le C\sigma\max\left\{ \sqrt{\frac{t+\log(m)}{(m_1\wedge
m_2)n}}\, , \ \frac{(t+\log(m))\log^{1/\alpha}(m_1\wedge m_2)}{n}
\right\}\,.
\end{equation}
\end{lemma}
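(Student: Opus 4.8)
The plan is to apply the matrix Bernstein inequality with $\psi_\alpha$-norm control, Proposition~\ref{prop:Bernstein_unbounded}, to the i.i.d. mean-zero matrices $Z_i := \xi_i X_i$. These are i.i.d. because the pairs $(X_i,Y_i)$ are, and $\E Z_i = \E\big(X_i\,\E(\xi_i\mid X_i)\big)=0$ since $\E(\xi_i\mid X_i)=0$ by the definition of $\xi_i$. As $\Delta_1 = \|n^{-1}\sum_{i=1}^n Z_i\|$, it then remains only to estimate the two parameters $U_Z^{(\alpha)}$ and $\sigma_Z$ entering that proposition and to simplify the resulting expression.

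The $\psi_\alpha$ parameter is immediate. Every element of the matrix completion basis has operator norm $\|X_i\|=\|e_j(m_1)e_k^\top(m_2)\|=1$, so $\|Z_i\|=|\xi_i|$ and hence $U_Z^{(\alpha)}=\inf\{u>0:\E\exp(|\xi_i|^\alpha/u^\alpha)\le 2\}$. Applying Jensen's inequality to the concave map $x\mapsto x^{1/K^\alpha}$ (valid for $K\ge 1$), condition~(\ref{subexp}) gives $\E\exp(|\xi_i|^\alpha/(K\sigma)^\alpha)\le \tilde c^{\,1/K^\alpha}\le 2$ as soon as $K\ge(\log\tilde c/\log 2)^{1/\alpha}$, so that $U_Z^{(\alpha)}\le c(\alpha,\tilde c)\,\sigma$.

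The essential computation is the variance parameter $\sigma_Z$. Writing $X_iX_i^\top=e_J(m_1)e_J(m_1)^\top$ and $X_i^\top X_i=e_K(m_2)e_K(m_2)^\top$ for the uniform random index pair $(J,K)$, both $\E(\xi_i^2X_iX_i^\top)$ and $\E(\xi_i^2X_i^\top X_i)$ are diagonal; e.g. the $j$-th diagonal entry of the former equals $(m_1m_2)^{-1}\sum_{k}\E(\xi_i^2\mid X_i=e_j(m_1)e_k^\top(m_2))$. Bounding the conditional second moment by a multiple of $\sigma^2$ (which follows from the first part of~(\ref{subexp})) and summing over the $m_2$ columns yields $\|\E(\xi_i^2X_iX_i^\top)\|\le C\sigma^2/m_1$, and symmetrically $\|\E(\xi_i^2X_i^\top X_i)\|\le C\sigma^2/m_2$, whence $\sigma_Z\le C\sigma/\sqrt{m_1\wedge m_2}$. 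For the logarithmic term I also need a matching lower bound: since $\mathrm{tr}\,\E(\xi_i^2X_iX_i^\top)=\E\xi_i^2\ge c_1\sigma^2$ by the second part of~(\ref{subexp}), the operator norm of this $m_1\times m_1$ positive matrix is at least $c_1\sigma^2/m_1$, and likewise $\ge c_1\sigma^2/m_2$ for the other factor, so $\sigma_Z\ge \sqrt{c_1}\,\sigma/\sqrt{m_1\wedge m_2}$.

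Combining these estimates in Proposition~\ref{prop:Bernstein_unbounded} gives the first term $\sigma_Z\sqrt{(t+\log m)/n}\le C\sigma\sqrt{(t+\log m)/((m_1\wedge m_2)n)}$ directly. For the second term, the two-sided control of $\sigma_Z$ gives $U_Z^{(\alpha)}/\sigma_Z\le C\sqrt{m_1\wedge m_2}$, so $(\log(U_Z^{(\alpha)}/\sigma_Z))^{1/\alpha}\le C\log^{1/\alpha}(m_1\wedge m_2)$, producing exactly the factor appearing in~(\ref{eq:lem:tau1}); collecting constants yields the bound with probability at least $1-e^{-t}$, a fortiori with the stated (conservative) $1-2e^{-t}$. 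I expect the variance computation of the third paragraph to be the main obstacle: unlike the bounded case of Lemma~\ref{lem:completion_bounded}, the factor $\xi_i^2$ interacts with the rank-one structure of $X_iX_i^\top$, so one must pass through the conditional second moment of the noise, and one needs both the upper bound (from the exponential-moment hypothesis) and the lower bound $\E\xi_i^2\ge c_1\sigma^2$ in order to keep the ratio $U_Z^{(\alpha)}/\sigma_Z$, and hence the logarithmic factor, under control.
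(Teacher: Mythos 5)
Your proposal is correct and runs on the same engine as the paper's proof: Proposition \ref{prop:Bernstein_unbounded} applied with $U_Z^{(\alpha)}\le c(\alpha,\tilde c)\,\sigma$ together with the two-sided variance control $\sqrt{c_1}\,\sigma/\sqrt{m_1\wedge m_2}\le\sigma_Z\le C\sigma/\sqrt{m_1\wedge m_2}$, the lower bound being exactly what keeps $\log^{1/\alpha}\bigl(U_Z^{(\alpha)}/\sigma_Z\bigr)$ of order $\log^{1/\alpha}(m_1\wedge m_2)$. The only structural difference is that you apply the proposition directly to $Z_i=\xi_iX_i$, which is legitimate because $\E(\xi_i\mid X_i)=0$ implies $\E(\xi_iX_i)=0$; the paper instead centers the design,
$$
\Delta_1\le\Big\|\frac1n\sum_{i=1}^n\xi_i\bigl(X_i-\E X\bigr)\Big\|+\frac1{\sqrt{m_1m_2}}\Big|\frac1n\sum_{i=1}^n\xi_i\Big|,
$$
treats the first term with Proposition \ref{prop:Bernstein_unbounded}, the second with the scalar Bernstein inequality for sub-exponential variables, and takes a union bound --- which is precisely where the probability $1-2e^{-t}$ in the statement comes from. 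Your route is slightly more economical (a single application of the proposition, probability $1-e^{-t}$), and centering changes $\sigma_Z$ only up to constants, so nothing is lost. One caveat, shared equally by the paper's own proof: your step $\E\bigl(\xi_i^2\mid X_i=e_j(m_1)e_k^\top(m_2)\bigr)\le C\sigma^2$ does not follow from (\ref{subexp}) as literally stated, since (\ref{subexp}) bounds only unconditional moments; if $\xi_i$ is allowed to depend on $X_i$, concentrating the noise variance on a single row shows that $\|\E(\xi_i^2X_iX_i^\top)\|$ can exceed $C\sigma^2/m_1$ by a factor of order $\log^{2/\alpha}(m_1)$. The paper's corresponding claim $\sigma_Z\le c'\sigma/(m_1\wedge m_2)^{1/2}$ requires the same strengthening, so both proofs implicitly read (\ref{subexp}) conditionally on $X_i$ (for instance, noise independent of the design), and your write-up is at the paper's level of rigor on this point.
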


\begin{proof} Observe first that for ${\tilde
X}=X-\E(X)$ we have
\begin{eqnarray}\label{eq:norms_of_x_1}
&& \sigma_{\tilde X}^2= \frac1{m_1\wedge m_2}\,.
\end{eqnarray}
Now,
  \begin{eqnarray}\label{interm1}
    \left\| \frac{1}{n}\sum_{i=1}^n \xi_i X_i\right\|
    &\leq& \left\| \frac{1}{n}\sum_{i=1}^n \xi_i (X_i-\E X_i)\right\| + \left\| \frac{1}{n}\sum_{i=1}^n \xi_i \E(X_i) \right\|\nonumber\\
    &\leq& \left\| \frac{1}{n}\sum_{i=1}^n \xi_i \left(X_i-\E X\right)\right\| + \sqrt{\frac{1}{m_1m_2}}\left| \frac{1}{n}\sum_{i=1}^n \xi_i
    \right|.
  \end{eqnarray}
Set $Z_i=\xi_i \left(X_i-\E X\right)$. These are i.i.d. random
matrices having the same distribution as a random matrix $Z$. It
follows from (\ref{eq:norms_of_x}) that $\|Z_i\|\le 2|\xi_i|$, and
thus condition (\ref{subexp}) implies that $U_Z^{(\alpha)}\leq c
\sigma$ for some constant $c>0$. Furthermore, in view of
(\ref{eq:norms_of_x_1}), we have $\sigma_Z\le c'\sigma
\sigma_{\tilde X}= c' \sigma/(m_1\wedge m_2)^{1/2}$ for some
constant $c'>0$ and $\sigma_Z\ge c_1^{1/2}\sigma/(2(m_1\wedge
m_2))^{1/2}$. Using these remarks we can deduce from Proposition
\ref{prop:Bernstein_unbounded} that there exists an absolute
constant $\tilde{C}>0$ such that for any $t>0$ with probability at
least $1-e^{-t}$ we have
\begin{align*}
   &\left\| \frac{1}{n}\sum_{i=1}^n \xi_i
\left(X_i-\E X\right)\right\|\\
&\hspace{0.5cm}\leq \tilde{C}\max\left\{\sigma_{Z}
\sqrt{\frac{t+\log (m)}{n}}\,, \ \ U_Z^{(\alpha)}\left(\log
    \frac{U_Z^{(\alpha)}}{\sigma_Z}\right)^{1/\alpha}\frac{t +
    \log (m)}{n}\right\}
    \\
    &\hspace{0.5cm}\leq C\sigma\max\left\{ \sqrt{\frac{t+\log(m)}{(m_1\wedge
m_2)n}}\, , \ \frac{(t+\log(m))\log^{1/\alpha}(m_1\wedge m_2)}{n}
\right\}\,.
\end{align*}
Finally, in view of Condition (\ref{subexp}) and Bernstein's
inequality for sub-exponential noise, we have for any $t>0$, with
probability at least $1-e^{-t}$,
\begin{eqnarray*}
\left| \frac{1}{n}\sum_{i=1}^n\xi_i\right| &\leq& C\sigma
\max\left\{ \sqrt{\frac{t}{n}} , \frac{t}{n}\right\},
\end{eqnarray*}
where $C>0$ depends only on $\tilde c$. We complete the proof by
using the union bound.
\end{proof}

Define now
\begin{align*}
|A_0|_* &= \max \left\{ \,\sqrt{\max_{1\leq i \leq
m_1}\sum_{j=1}^{m_2} a_0^2(i,j)},\ \sqrt{\max_{1\leq j \leq
m_2}\sum_{i=1}^{m_1} a_0^2(i,j)} \ \right\}\\
&\hspace{7cm}\le \max_{i,j}|a_0(i,j)| \sqrt{m_1\vee m_2}\,.
\end{align*}

\begin{lemma}\label{lem:tau2}
Let $X_i$ be i.i.d. random variables uniformly distributed in
$\mathcal X$. Then, for all $t>0,$ with probability at least $1
  -e^{-t}$ we have
\begin{align}\label{eq:lem:tau2}
\Delta_2  \le 2\max \left\{
\,|A_0|_*\sqrt{\frac{t+\log(m)}{m_1m_2n}}\,, \ \,
2\max_{i,j}|a_0(i,j)|\frac{t+\log(m)}{n}\ \right\}\,.
\end{align}
If $\max_{i,j}|a_0(i,j)|\leq a$ for some $a>0$, then with the same
probability
$$
\Delta_2  \le  \, 2a \, \max \left\{
\,\sqrt{\frac{t+\log(m)}{(m_1\wedge m_2)n}}, \
\frac{2(t+\log(m))}{n} \ \right\}\,.
$$

\end{lemma}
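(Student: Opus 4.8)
The plan is to apply Proposition~\ref{prop:Bernstein_bounded} to the i.i.d.\ centered matrices
\[
Z_i = \mathrm{tr}(A_0^\top X_i)\,X_i - \mathbb{E}\big(\mathrm{tr}(A_0^\top X)X\big),
\]
so that $\Delta_2 = \|n^{-1}\sum_{i=1}^n Z_i\|$. First I would record the explicit form of these matrices: when $X_i = e_j(m_1)e_k^\top(m_2)$ we have $\mathrm{tr}(A_0^\top X_i) = a_0(j,k)$, hence $\mathrm{tr}(A_0^\top X_i)X_i = a_0(j,k)\,e_j(m_1)e_k^\top(m_2)$, while averaging over the uniform choice of $(j,k)$ gives $\mathbb{E}(\mathrm{tr}(A_0^\top X)X) = (m_1m_2)^{-1}A_0$. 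It then remains only to produce the two inputs of the proposition, namely the almost sure bound $U$ on $\|Z_i\|$ and the variance parameter $\sigma_Z$.

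For the uniform bound, since $e_j(m_1)e_k^\top(m_2)$ has operator norm $1$ we get $\|\mathrm{tr}(A_0^\top X_i)X_i\| \le \max_{i,j}|a_0(i,j)|$, and using $\|A_0\| \le \|A_0\|_2 \le \sqrt{m_1m_2}\,\max_{i,j}|a_0(i,j)|$ we obtain $\|(m_1m_2)^{-1}A_0\| \le (m_1m_2)^{-1/2}\max_{i,j}|a_0(i,j)| \le \max_{i,j}|a_0(i,j)|$; by the triangle inequality $\|Z_i\| \le 2\max_{i,j}|a_0(i,j)| =: U$.

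The main computation is the variance parameter. Writing $V = \mathrm{tr}(A_0^\top X)X$ and $\bar V = \mathbb{E}V = (m_1m_2)^{-1}A_0$, I would use $\mathbb{E}(ZZ^\top) = \mathbb{E}(VV^\top) - \bar V\bar V^\top \preceq \mathbb{E}(VV^\top)$ and the analogous identity for $Z^\top Z$, so that it suffices to bound $\mathbb{E}(VV^\top)$ and $\mathbb{E}(V^\top V)$. A direct evaluation over the uniform distribution on $\mathcal X$ gives the diagonal matrices
\[
\mathbb{E}(VV^\top) = \frac{1}{m_1m_2}\sum_{j,k}a_0^2(j,k)\,e_j(m_1)e_j^\top(m_1), \qquad \mathbb{E}(V^\top V) = \frac{1}{m_1m_2}\sum_{j,k}a_0^2(j,k)\,e_k(m_2)e_k^\top(m_2),
\]
whose operator norms are $(m_1m_2)^{-1}\max_j\sum_k a_0^2(j,k)$ and $(m_1m_2)^{-1}\max_k\sum_j a_0^2(j,k)$, respectively. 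By the definition of $|A_0|_*$ this yields $\sigma_Z \le |A_0|_*/\sqrt{m_1m_2}$. Plugging $U$ and this bound on $\sigma_Z$ into Proposition~\ref{prop:Bernstein_bounded} gives exactly (\ref{eq:lem:tau2}).

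Finally, to deduce the second inequality under $\max_{i,j}|a_0(i,j)|\le a$, I would use $|A_0|_* \le a\sqrt{m_1\vee m_2}$ together with the identity $m_1m_2 = (m_1\vee m_2)(m_1\wedge m_2)$, which turns the first term into $a\sqrt{(t+\log m)/((m_1\wedge m_2)n)}$, and bound the second term by $2a(t+\log m)/n$. The only delicate point is the variance computation, where one must correctly identify the row-sum and column-sum structure and check that subtracting the centering term $\bar V\bar V^\top$ can only decrease the operator norm; everything else is routine substitution into the Bernstein bound.
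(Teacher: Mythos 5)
Your proposal is correct and follows essentially the same route as the paper: apply Proposition~\ref{prop:Bernstein_bounded} to $Z_i=\mathrm{tr}(A_0^\top X_i)X_i-\E(\mathrm{tr}(A_0^\top X)X)$ with $U=2\max_{i,j}|a_0(i,j)|$ and $\sigma_Z\le |A_0|_*/\sqrt{m_1m_2}$, then specialize via $|A_0|_*\le a\sqrt{m_1\vee m_2}$. The paper states these two bounds without derivation; your explicit computation of $\E(VV^\top)$, $\E(V^\top V)$ and the observation that centering only decreases the operator norm (since $\E(ZZ^\top)=\E(VV^\top)-\bar V\bar V^\top\preceq\E(VV^\top)$) is exactly the verification behind the paper's one-line claim.
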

\begin{proof} We apply Proposition \ref{prop:Bernstein_bounded} for
the random variables $Z_i=\mathrm{tr}(A_0^\top X_i) X_i - \mathbb E
(\mathrm{tr}(A_0^\top X)X)$. Using (\ref{eq:norms_of_x}) we get
$\|Z_i\|\le 2\max_{i,j}|a_0(i,j)|$ and
$$
\sigma^2_Z \le \max \left\{ \, \|\E\big(\langle A_0,X  \rangle^2
XX^\top\big)\|,\, \|\E\big(\langle A_0,X  \rangle^2 X^\top
X\,\big)\| \,\right\}\le \frac{|A_0|_*^2}{m_1m_2}\,.
$$
Thus, (\ref{eq:lem:tau2}) follows from Proposition
\ref{prop:Bernstein_bounded}.

\end{proof}

\end{document}